\numberwithin{equation}{section}
\newcommand{\margnote}[1]{
	\ifthenelse{\boolean{shownotes}}%
	{\marginpar{\raggedright\tiny\texttt{#1}}}%
	{}%
}
\newcommand{\hole}[1]{
	\ifthenelse{\boolean{shownotes}}%
	{\begin{center} \fbox{ \rule {.25cm}{0cm}
				\rule[-.1cm]{0cm}{.4cm} \parbox{.85\textwidth}{\begin{center}
						\texttt{#1}\end{center}} \rule {.25cm}{0cm}}\end{center}}
	{}
}
\theoremstyle{plain}
\newtheorem{defi}{Definition}[section]
\newtheorem{theo}{Theorem}[section]
\newtheorem{lema}{Lemma}
\newtheorem{res}{Result}
\newtheorem{coro}{Corollary}
\newtheorem{rem}{Remark}
\theoremstyle{remark}
\theoremstyle{remark}
\begin{document}

	\title[]{On the equations of compressible fluid dynamics with Cattaneo-type extensions for the heat flux: Symmetrizability and relaxation structure}
	
	\author[F. Angeles]{Felipe Angeles}
	
	\address{{\rm (F. Angeles)} IIMAS\\Universidad Nacional Aut\'{o}noma 
		de M\'{e}xico\\Circuito Exterior s/n, Ciudad de M\'{e}xico C.P. 04510 (Mexico)}
	
	\email{teojkd@ciencias.unam.mx}

	\begin{abstract}	
	The aim of this work is twofold. From a mathematical point of view, we show the existence of a hyperbolic system of equations that is not symmetrizable in the sense of Friedrichs. Such system appears in the theory of compressible fluid dynamics with Cattaneo-type extensions for the heat flux. In contrast, the linearizations of such system around constant equilibrium solutions have Friedrichs symmetrizers. Then, from a physical perspective, we aim to understand the relaxation term appearing in this system. By noticing the violation of the Kawashima-Shizuta condition, locally and smoothly, with respect to the Fourier frequencies, we construct persistent waves, i.e., solutions preserving the $L^{2}$ norm for all times that are not dissipated by the relaxation terms.
	\end{abstract}
	\keywords{Friedrichs symmetrizable, hyperbolic quasilinear systems, Cattaneo-type extensions, Kawashima-Shizuta condition}
	\maketitle
	
	\setcounter{tocdepth}{1}

	\section{Introduction}
Consider the local form of conservation of mass ($\rho$), the balance of linear momentum ($\rho v$) and the balance of total energy ($E=\tfrac{1}{2}\rho|v|^{2}+e$) for a compressible inviscid fluid in space, that is, 
	\begin{eqnarray}
	\rho_{t}+\nabla\cdot(\rho v)&=&0,\label{eq:1}\\
	(\rho v)_{t}+\nabla\cdot(\rho v\otimes v)&=&-\nabla\cdot p,\label{eq:2}\\
	(\rho E)_{t}+\nabla\cdot(\rho E v)&=&-\nabla\cdot(pv)-\nabla\cdot q,\label{eq:3}
\end{eqnarray}
together with the following evolution equation for the heat flux ($q$), 
	\begin{equation}
	\tau\left[\partial_{t}q+v\cdot\nabla q-\frac{1}{2}\left(\nabla v-\nabla v^{\top}\right)q+\frac{\lambda}{2}\left(\nabla v+\nabla v^{\top}\right) q+\nu(\nabla\cdot v)q\right]+q=-\kappa\nabla\theta,
	\label{eq:objder}
\end{equation}
where $\theta$ is the temperature field, $\kappa$ is the thermal conductivity and $\tau$ is the thermal relaxation time. In \cite{morro2}, Morro shows that for any pair of invariant scalars $(\lambda,\nu)$, the term between brackets in \eqref{eq:objder}, is an objective derivative. It is used in susbstitution of the partial time derivative of $q$, in order to obtain a frame invariant formulation for the Maxwell-Cattaneo heat flux equation, namely,
\begin{align*}
\tau\partial_{t}q+q=-\kappa\nabla\theta.
\end{align*}
For this reason, we refer to \eqref{eq:objder} as an \emph{objective Cattaneo's type equation for the heat flux}. Notice that for no values of $(\lambda,\nu)$, the evolution equation \eqref{eq:objder} yields the material formulation of the Maxwell-Cattaneo law proposed by Christov and Jordan \cite{jordy}, namely,
\begin{align}
	\tau\left[q_{t}+v\cdot\nabla q\right]+q=-\kappa\nabla\theta.\label{eq:materialflux}
\end{align}
Although \eqref{eq:materialflux} is a Galilean invariant formulation of the Maxwell-Cattaneo law \cite{jordy}, Morro points out that it is not objective \cite{morro2}. Nonetheless, many of the well known objective formulations for the heat flux can be obtained as particular cases  of $(\lambda,\nu)$ in \eqref{eq:objder} (see the discussion of Morro in \cite{amorro}, \cite{morro2} and the references therein). It is worth mentioning the case $(\lambda,\nu)=(-1,1)$, proposed by Christov, which corresponds to the Lie-Oldroyd upper convected derivative \cite{christov}. This particular formulation has the propery that, when it is combined with the material invariant form of the balance of internal energy, it allows for the heat flux to be eliminated in order to obtain a single hyperbolic equation for the temperature. Then, Straughan proposes the coupling between \eqref{eq:1}-\eqref{eq:3} and Christov's evolution equation for the heat flux (i.e. \eqref{eq:objder} with $(\lambda,\nu)=(-1,1)$) as a model for an inviscid fluid \cite{stra}. However, the quasilinear form of this coupling, known as the Cattaneo-Christov system of equations, it is not \emph{hyperbolic} \cite{angelesnon}. Furthermore, when $(\lambda,\nu)$ are considered real valued functions of $(\rho,\theta)$, the quasilinear form of the coupling between \eqref{eq:1}-\eqref{eq:3} and \eqref{eq:objder} is a \emph{hyperbolic system of equations} if and only if $(\lambda(\rho,\theta),\nu(\rho,\theta))=(1,-1)$ for every $\rho>0$ and $\theta>0$. Throughout this paper we refer to this hyperbolic system as the $(1,-1)$-quasilinear system of equations.\
Consider the first order partial differential equation
\begin{align}
	\label{eq:stronghyp}
	U_{t}+\sum_{j=1}^{d}\widetilde{A}^{j}(x,t)\partial_{j}U+\widetilde{D}(x,t)U&=0
\end{align}
where $\widetilde{A}^{j}(x,t)$ and $\widetilde{D}(x,t)$ are square matrices of order $n$, infinitely differentiable with respect to $x\in\mathbb{R}^{d}$ and $t>0$. The Cauchy problem for \eqref{eq:stronghyp} is $L^{2}$ well-posed if for any $U_{0}\in L^{2}(\mathbb{R}^{d})$ given at $t=0$, there is a unique solution $U=U(x,t)$ in $\mathcal{C}\left([0,T];L^{2}(\mathbb{R}^{3})\right)$ such that
\begin{align}
	\|U(\cdot,t)\|_{L^{2}(\mathbb{R}^{d})}\leq C\|U_{0}\|_{L^{2}(\mathbb{R}^{d})}\quad\mbox{for all}\quad t\in[0,T].\label{eq:l2estimate}
\end{align}
We say that \eqref{eq:stronghyp} is a hyperbolic system of equations if $\widetilde{A}(x,t;\xi):=\sum_{j}\widetilde{A}^{j}(x,t)\xi_{j}$ is a real diagonalizable matrix for any $(x,t;\xi)\in\mathbb{R}^{d}\times[0,T]\times\mathbb{S}^{d-1}$. In such case, its eigenvalues are called \emph{characteristic speeds}. Although the hyperbolicity of a first order linear or a quasilinear system of equations is a necessary property for the $L^{2}$ well-posedness of its Cauchy problem (cf. \cite{kajineed}, \cite{kajitani}, \cite{kano}, \cite{kasahara}, \cite{gstran}), it is not sufficient \cite{gstran}.\\
Another necessary condition has been provided by Ivrii and Petkov. They have shown that if estimate  \eqref{eq:l2estimate} is valid for $u\in\mathcal{C}_{0}\left(\mathbb{R}^{d}\times(0,T)\right)$, then there exists a bounded \emph{microlocal symmetrizer}, that is, a bounded symmetric matrix $S(x,t,\xi)$, uniformly positive definite, homogeneous of degree $0$ in $\xi\neq0$ and such that $S(x,t;\xi)\widetilde{A}(x,t;\xi)$ is symmetric (cf. \cite{metivierl2}). Moreover, by Kreiss Matrix Theorem \cite[Theorem 2.3.2]{otto}, the $L^{2}$ well-posedness of \eqref{eq:stronghyp} with frozen coefficients at $(x,t)$, is equivalent to the existence of a positive definite Hermitian matrix $H(x,t;\xi)$ such that $H(x,t;\xi)\left(-i\widetilde{A}(x,t;\xi)\right)=i\widetilde{A}^{\top}(x,t;\xi)H(x,t;\xi)$. However, outside the case of systems with constant coefficients, the existence of bounded \emph{symmetrizers} is insufficient for the existence of the $L^{2}$-energy estimate \eqref{eq:l2estimate}. We can find two counterexamples, one in \cite{gstran} and another in \cite[Section 3]{metivierl2}. The Lipschitz continuity of the symmetrizer $S$, with respect to $(x,t;\xi)$ for $\xi\neq 0$, is sufficient in order to deduce the existence of $L^{2}$ energy estimates. Moreover, the condition of Lipschitz regularity is sharp in the sense that, we can find Cauchy problems with non-Lipschitz symmetrizers that are ill-posed in $L^{2}$ as well as in $\mathcal{C}^{\infty}$ (see, \cite[Section 3.3]{metivierl2}).\\
When the symmetrizer is independent of $\xi$, we say that it is a \emph{Friedrichs symmetrizer} (see \cite[Definition 2.1]{benzoserre}). In this case, \eqref{eq:l2estimate} is easily obtained by the standard \emph{energy method}.\\
Assume that \eqref{eq:stronghyp} is of quasilinear type, that is, $\widetilde{A}^{j}(x,t)=A^{j}(U(x,t))$ and $\widetilde{D}(x,t)=D(U(x,t))$ where the mappings $U\mapsto A^{j}(U)$, $U\mapsto D(U)$ are smooth and that the system can be derived from a conservative form (i.e. the coefficients $A^{j}$ are Jacobian matrices of flux functions). Then, a Friedrichs symmetrizer $S=S(U)$ (see, \cite[Definition 10.1]{benzoserre}) can be obtained as the Hessian matrix of a convex entropy function (cf. \cite{daf,serre}). Many of the standard models in compressible fluid dynamics have such structure (see \cite{daf}). Nonetheless, one can also look for a  Friedrichs symmetrizer by the method of inspection. This is the case of the one dimensional version of the Cattaneo-Christov system \cite[Section 3]{amp} and of the Cattaneo-Christov-Jordan system in several space dimensions, the latter being the coupling between \eqref{eq:1}-\eqref{eq:3} and \eqref{eq:materialflux} (see, \cite[Section 4]{angelesnon}). None of these systems has a conservative structure. Kawashima and Yong showed that it is possible to define a convex entropy for quasilinear systems not necessarily in conservative form \cite{kawayong}: if the symmetrizer is the Jacobian of a diffeomorphic change of variables, $S(U)=D\Psi(U)$, then a convex entropy function can be introduced. However, it is easily seen that the symmetrizer of the Cattaneo-Christov-Jordan system, in one and several space dimensions, does not fulfill such property and the systems are not necessarily endowed with convex entropy functions. This raises the question:
\begin{itemize}
	\item [(Q1)] Can we find a Friedrichs symmetrizer for the $(1,-1)$ quasilinear system of equations?
\end{itemize}
Surprisingly, the answer to this question is negative (see Theorem \ref{Fsymm}). Therefore, the $(1,-1)$-quasilinear system of equations is a hyperbolic system without a Friedrichs symmetrizer. To the author's knowledge, this property has never been seen before for multidimensional systems appearing from physical considerations. In this regard, it is important to mention the example provided by Lax in \cite{laxsymm}. In this work, Lax exhibits a hyperbolic system of equations ($d=2$ and $n=3$) with constant coefficients matrices that cannot be made symmetric by the same similarity transformation.\\
Even when the $(1,-1)$-quasilinear system is not Friedrichs symmetrizable, we argue that a microlocal symmetrizer can be found, at least at the level of linearization. Furthermore, the linearization of this system around \emph{constant equilibrium states}, does have a Friedrichs symmetrizer. In consequence, the \emph{strict dissipativity} of the system is equivalent to its \emph{genuinely coupling} \cite{kawazuta}. In lay terms, the strict dissipativity is the property that solutions of the linearized systems around constant equilibrium states show some decay structure. On the other hand, the genuinely coupling condition for a system of the form \eqref{eq:stronghyp} with constant coefficients, states that, the dissipation terms do not allow solution of traveling wave type to be, simultaneously, solutions to the hyperbolic system without dissipation. In the case of every $(\lambda,\nu)$-quasilinear system, the thermal relaxation accounts for dissipation effects.\\
Observe that, the one dimensional version of the $(1,-1)$-quasilinear system coincides with the one dimensional version of the inviscid Cattaneo-Christov system. In \cite{amp}, it was shown that, the linearizations around constant equilibrium states of the one dimensional Cattaneo-Christov system are strictly dissipative. In consequence, there are \emph{compensation matrices} that allow to compute energy decay rates, see \cite[Lemma 5.1 and Theorem 5.2]{amp}. This means that, the presence of the relaxation term in the linearization of the one dimensional $(1,-1)$-quasilinear system produces the decay of the $L^{2}$ norm of its solutions as $t\rightarrow\infty$.
 However, in Theorem \ref{openkawa} we show that, in several space dimensions, the system is not genuinely coupled (in the sense of Kawashima and Shizuta \cite{kawazuta}) and the strict dissipativity does not hold true.\\
 The violation of the Kawashima-Shzita condition is related with the possibility of finding \emph{non-constant equilibrium solutions}. As Mascia and Natalini argue \cite{mascialini}, the set of equilibrium solutions or \emph{Maxwellians} can be much richer if the Kawashima-Shizuta condition is not satisfied. In the one dimensional strictly hyperbolic case, they showed that, the linearization around equilibirum solutions of hyperbolic balance laws with a relaxation source, have non-consant Maxwellians if and only if the Kawashima-Shizuta condition is violated \cite[Proposition 1]{mascialini}.\\
 In this work we show the existence of \emph{persistent waves} for the  linearization around constant equilibrium states of the $(1,-1)$-quasilinear system. That is, we construct non-constant Maxwellians with constant $L^{2}$ norm for all times. Thus showing the existence of solutions that escape the effect of the relaxation terms.

\section{Thermodynamical assumptions}	
	Throughout this paper we make the following thermodynamical assumptions:
	\begin{itemize}
		\item [(\textbf{C1})] The independent thermodynamical variables are the mass density field $\rho(x,t)>0$ and the absolute temperature field $\theta(x,t)>0$. They vary within the domain $\mathcal{D}=\{(\rho,\theta)\in\mathbb{R}^{2}~|~\rho>0,\theta>0\}$. The thermodynamic pressure $p$, the internal energy $e$ and the thermal conductivity $\kappa$ are given smooth functions of $(\rho,\theta)\in\mathcal{D}$.
		\item [(\textbf{C2})] The fluid satisfies the following conditions $p,p_{\rho},p_{\theta},e_{\theta},\kappa>0$ for all $(\rho,\theta)\in\mathcal{D}$.
	\end{itemize}
In particular, assumption (\textbf{C2}) refers to compressible fluids satisfying the standard assumptions of Weyl \cite{weyl}.
	\section{A hyperbolic system without a Friedrichs simmetrizer}
We begin this section by writing in quasilinear form of the coupling between the following Cattaneo-type extension for the heat flux,
	\begin{equation}
	\tau\left[\partial_{t}q+v\cdot\nabla q+\nabla v^{\top}q-(\nabla\cdot v)q\right]+q=-\kappa\nabla\theta,
	\label{eq:hypextension}
\end{equation}
and equations \eqref{eq:1}-\eqref{eq:3} in three space dimensions. Notice that \eqref{eq:hypextension} was obtained by setting $(\lambda,\nu)=(1,-1)$ in \eqref{eq:objder}.\\
Let the state variable be $U=(\rho,v,\theta,q)^{\top}\in\mathcal{O}\subset\mathbb{R}^{8}$, where $\mathcal{O}:=\left\lbrace(\rho,v,\theta,q)\in\mathbb{R}^{8}:\rho>0,\theta>0\right\rbrace$. The quasilinear form between \eqref{eq:1}-\eqref{eq:3} and \eqref{eq:hypextension} is 
\begin{align}
	U_{t}+A^{j}(U)\partial_{j}U+Q(U)=0,\label{eq:quasi2}
\end{align}
	where repeated index notation has been used in the space derivatives $\partial_{j}$ and $i=1,2,3$. Here, once $U\in\mathcal{O}$ is given, each coefficient $A^{j}(U)$ is a matrix of order $8\times 8$ and $Q(U)$ is a vector in $\mathbb{R}^{8}$. For $\xi=(\xi_{1},\xi_{2},\xi_{3})\in\mathbb{R}^{3}\setminus\{0\}$ and $U\in\mathcal{O}$, we define the symbol
	\small
	\begin{align}
		\label{eq:Axi}
		A(\xi;U):=\sum_{i=1}^{3}A^{i}(U)\xi_{i}=\left(\begin{array}{cccccccc}
			\xi\cdot v&\xi_{1}\rho&\xi_{2}\rho&\xi_{3}\rho&0&0&0&0\\
			\xi_{1}\frac{p_{\rho}}{\rho}&\xi\cdot v&0&0&\xi_{1}\frac{p_{\theta}}{\rho}&0&0&0\\
			\xi_{2}\frac{p_{\rho}}{\rho}&0&\xi\cdot v&0&\xi_{2}\frac{p_{\theta}}{\rho}&0&0&0\\
			\xi_{3}\frac{p_{\rho}}{\rho}&0&0&\xi\cdot v&\xi_{3}\frac{p_{\theta}}{\rho}&0&0&0\\
			0&\xi_{1}\frac{\theta p_{\theta}}{\rho e_{\theta}}&\xi_{2}\frac{\theta p_{\theta}}{\rho e_{\theta}}&\xi_{3}\frac{\theta p_{\theta}}{\rho e_{\theta}}&v\cdot \xi&\frac{\xi_{1}}{\rho e_{\theta}}&\frac{\xi_{2}}{\rho e_{\theta}}&\frac{\xi_{3}}{\rho e_{\theta}}\\
			0&&&&\frac{\xi_{1}\kappa}{\tau}&v\cdot\xi&0&0\\
			0&&\mathcal{Q}_{1,-1}(\xi;q)&&\frac{\xi_{2}\kappa}{\tau}&0&v\cdot\xi&0\\
			0&&&&\frac{\xi_{3}\kappa}{\tau}&0&0&v\cdot\xi\\
		\end{array}\right),
	\end{align}
	\normalsize
	where, for each $\xi\in\mathbb{R}^{3}\setminus\{0\}$, the sub-block matrix $\mathcal{Q}_{1,-1}(q;\xi)$ is of order $3\times 3$ and given as
	\small
	\begin{equation}
		\mathcal{Q}_{1,-1}(q;\xi)=\left(\begin{array}{ccc}
			0&\xi_{1}q_{2}-\xi_{2}q_{1}&\xi_{1}q_{3}-\xi_{3}q_{1}\\
			\xi_{2}q_{1}-\xi_{1}q_{2}&0&\xi_{2}q_{3}-\xi_{3}q_{2}\\
			\xi_{3}q_{1}-\xi_{1}q_{3}&\xi_{3}q_{2}-\xi_{2}q_{3}&0
		\end{array}\right),
		\label{eq:23}
	\end{equation}
	\normalsize
and
\begin{align}
	Q(U)=\frac{1}{\tau}\left(0,0,0,0,0,q_{1},q_{2},q_{3}\right)^{\top}.\label{eq:QU2}
\end{align}
Let us recall the definition of Friedrichs symmetrizability for a quasilinear system of equations (cf. \cite[Definition 10.1]{benzoserre}).
\begin{defi}
The quasilinear system \eqref{eq:quasi2} is called Friedrichs-symmetrizable in $\mathcal{O}$ if there exists a smooth mapping $S:\mathcal{O}\rightarrow\operatorname{Symm}_{8}(\mathbb{R})$ (real symmetric matrices of eight order) such that $S(U)$ is positive definite, and the matrices $S(U)A^{j}(U)$ are symmetric for all $U\in\mathcal{O}$. Then, we refer to $S=S(U)$ as the Friedrichs symmetrizer for \eqref{eq:quasi2}.
\end{defi}
\begin{rem}
	\label{remlinearf}
Let $V\in\mathcal{O}$ be a smooth state. By a linearization of \eqref{eq:quasi2} around the state $V\in\mathcal{O}$ we mean the linear system 
\begin{align}
U_{t}+A^{j}(V)\partial_{j}U+DQ(V)U=0,\label{eq:linearf}
\end{align}
where $DQ(V)$ stands as the Jacobian matrix of \eqref{eq:QU2} evaluated at $V$. If $S=S(U)$ is a Friedrichs symmetrizer for \eqref{eq:quasi2} then, the matrix $S(V)$ is symmetric, positive definite and symmetrizes the symbol $A(\xi;V)$ independently of $\xi\in\mathbb{R}^{3}\setminus\{0\}$. Therefore, $S(V)$ is a Friedrichs symmetrizer for \eqref{eq:linearf}.
\end{rem}
\begin{theo}
\label{Fsymm}
Under the thermodynamical assumptions (\textbf{C1}) and (\textbf{C2}), the $(1,-1)$-quasilinear system of equations, is not Friedrichs symmetrizable.
\end{theo}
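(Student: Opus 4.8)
The plan is to reduce, via Remark~\ref{remlinearf}, to a single constant-coefficient linearization and then rule out a Friedrichs symmetrizer by a direct computation. If $S=S(U)$ were a Friedrichs symmetrizer for \eqref{eq:quasi2}, then for every fixed $V\in\mathcal{O}$ the constant matrix $S(V)$ would be symmetric, positive definite, and would satisfy $S(V)A^{j}(V)=A^{j}(V)^{\top}S(V)$ for $j=1,2,3$; equivalently, $S(V)A(\xi;V)$ would be symmetric for all $\xi\in\mathbb{R}^{3}\setminus\{0\}$. Hence it suffices to exhibit one state $V$ at which $A^{1}(V),A^{2}(V),A^{3}(V)$ admit no common symmetric positive definite symmetrizer. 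I would take $V=(\rho,0,\theta,q)\in\mathcal{O}$ with $\rho,\theta>0$ fixed and with heat flux $q=(q_{1},q_{2},q_{3})$ all of whose components are nonzero and with $q_{1}^{2}\neq q_{2}^{2}$ (for instance $q=(1,2,3)$). Since the diagonal entries of $A(\xi;V)$ all equal $v\cdot\xi$, they vanish at $v=0$, so $A^{1}(V),A^{2}(V),A^{3}(V)$ are the explicit, highly sparse matrices obtained from \eqref{eq:Axi}--\eqref{eq:23} with $v=0$ and $\xi=e_{1},e_{2},e_{3}$; their nonzero entries are the thermodynamic quantities $\rho$, $p_{\rho}/\rho$, $p_{\theta}/\rho$, $\theta p_{\theta}/(\rho e_{\theta})$, $1/(\rho e_{\theta})$, $\kappa/\tau$ (all positive by (\textbf{C2})) together with $\pm q_{1},\pm q_{2},\pm q_{3}$ coming from the antisymmetric block $\mathcal{Q}_{1,-1}(q;\xi)$ sitting in rows $6$--$8$, columns $2$--$4$.

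Suppose now $S$ is symmetric positive definite with $SA^{j}(V)=A^{j}(V)^{\top}S$ for $j=1,2,3$. The engine of the argument is the sparsity: each $A^{j}(V)$ has exactly two identically zero columns --- columns $7,8$ for $A^{1}(V)$, columns $6,8$ for $A^{2}(V)$, columns $6,7$ for $A^{3}(V)$ --- so the correspondingly indexed two rows of $SA^{j}(V)$ vanish identically. Reading these six vanishing rows off entrywise, then imposing the symmetry of the remaining entries of $SA^{1}(V),SA^{2}(V),SA^{3}(V)$ together with the symmetry of $S$ itself, I expect to be forced into a rigid structure for $S$: in the $(1,3,1,3)$ block splitting $U=(\rho,v,\theta,q)$ the heat-flux block $S_{qq}$ equals $\mu I_{3}$ with $\mu=S_{66}=S_{77}=S_{88}$; the cross-block $S_{\theta q}$ vanishes; $S_{qv}$ equals $\nu I_{3}$ for some scalar $\nu$; $S_{q\rho}=\tfrac{\mu}{\rho}(q_{1},q_{2},q_{3})^{\top}$; and $S_{\rho\theta}$, $S_{v\theta}$, $S_{55}$ are pinned down linearly in terms of $\mu$ and $\nu$. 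In particular $S_{16}=S_{61}=\mu q_{1}/\rho$ and $S_{17}=S_{71}=\mu q_{2}/\rho$.

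Once $S$ has been collapsed onto this two-parameter family the contradiction is short. The symmetry of $SA^{1}(V)$ at position $(1,2)$ and of $SA^{2}(V)$ at position $(1,3)$ produce two equations whose left-hand sides agree except for the single terms $q_{2}S_{17}$ and $q_{1}S_{16}$, while their right-hand sides are $(p_{\rho}/\rho)S_{22}$ and $(p_{\rho}/\rho)S_{33}$; subtracting and substituting $S_{16}=\mu q_{1}/\rho$, $S_{17}=\mu q_{2}/\rho$ yields
\[
S_{22}-S_{33}=\frac{\mu\,(q_{1}^{2}-q_{2}^{2})}{p_{\rho}}.
\]
On the other hand, the symmetry of $SA^{1}(V)$ at position $(2,5)$ and of $SA^{2}(V)$ at position $(3,5)$, after using $S_{\theta q}=0$, reduce to $(p_{\theta}/\rho)S_{22}+(\kappa/\tau)S_{26}=(p_{\theta}/\rho)S_{33}+(\kappa/\tau)S_{37}$; since $S_{26}=S_{37}\ (=\nu)$ and $p_{\theta}/\rho>0$, this forces $S_{22}=S_{33}$. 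Combining the two identities gives $\mu(q_{1}^{2}-q_{2}^{2})=0$, hence $\mu=S_{66}=0$, contradicting the positive definiteness of $S$. Therefore no symmetric positive definite $S(V)$ exists, and by the reduction above \eqref{eq:quasi2} is not Friedrichs symmetrizable.

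The step I expect to be the main obstacle is the second one: extracting from the a priori large linear system ``$SA^{j}(V)$ symmetric for $j=1,2,3$, with $S$ symmetric positive definite'' exactly the set of vanishing entries and linear identities that reduces $S$ to the family parametrized by $\mu$ and $\nu$, and ordering the deductions so that the two-line contradiction at the end becomes available. One should also carefully verify the explicit form of the matrices $A^{j}(V)$, in particular the placement and antisymmetry of $\mathcal{Q}_{1,-1}(q;\xi)$: this is precisely the term responsible for the failure of symmetrizability, since it vanishes when $q=0$, consistently with the fact that the linearizations about constant equilibrium states ($q=0$) do admit Friedrichs symmetrizers.
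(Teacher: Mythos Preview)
Your plan is correct and follows essentially the same route as the paper: fix a state $V\in\mathcal{O}$ with all $q_i\neq0$, impose the $28$ symmetrization conditions coming from $S(V)A^{j}(V)=A^{j}(V)^{\top}S(V)$ for $j=1,2,3$, extract a cascade of vanishing off-diagonal entries of $S$, and conclude that a diagonal entry in the $q$-block of $S$ must vanish, contradicting positive definiteness. The structural reductions you anticipate ($S_{\theta q}=0$, $S_{qv}=\nu I_{3}$, $S_{qq}=\mu I_{3}$, $s_{1,5+i}=\mu q_{i}/\rho$) are exactly what the paper obtains in its intermediate steps (its equations \eqref{eq:firstbadge}, \eqref{eq:secondbadge}, and the reductions $(2{:}6')$--$(4{:}8')$, together with $(5{:}6)$--$(5{:}8)$ to force $s_{66}=s_{77}=s_{88}$).

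The only noteworthy difference is the endgame. The paper, after obtaining $s_{16}=s_{17}=s_{18}=0$ directly from equations $(1{:}2')$--$(1{:}4')$ evaluated at canonical $\xi$, reads off $s_{66}=s_{77}=s_{88}=0$ from $(3{:}8')$, $(4{:}6')$, $(4{:}7')$; this needs only $q_i\neq0$. Your route instead compares the $(1,2)$/$(1,3)$ and $(2,5)$/$(3,5)$ positions to get $S_{22}-S_{33}=\mu(q_1^{2}-q_2^{2})/p_{\rho}$ and $S_{22}=S_{33}$, whence $\mu=0$; this is equally valid but requires the harmless extra hypothesis $q_1^{2}\neq q_2^{2}$. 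Either way the bulk of the work --- which you correctly flag as the main obstacle --- is the systematic elimination that collapses $S$, and that part is identical in spirit to the paper's argument.
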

\begin{proof}
If there is a Friedrichs symmetrizer $S=S(U)$ for the system \eqref{eq:quasi2} then,
\begin{align}
S(U)A(\xi;U)&=A(\xi;U)^{\top}S(U)\quad\mbox{for all}\quad(\xi,U)\in\mathbb{S}^{2}\times\mathcal{O},\label{eq:symmprop}\\
S(U)&>0\quad\mbox{for all}\quad U\in\mathcal{O}.\label{eq:positivity}
\end{align}
Set any $\overline{U}=\left(\bar{\rho},\bar{v},\bar{\theta},\bar{q}_{1},\bar{q}_{2},\bar{q}_{3}\right)\in\mathcal{O}$ for which $\bar{q}_{i}\neq 0$ for all $i=1,2,3$ and let us denote by $s_{ij}$ any component of the matrix $S(\overline{U})$. If $\mathcal{S}_{k}(\overline{U})$ denotes the $kth$-row of $S(\overline{U})$ and $\mathcal{A}^{\ell}(\xi;\overline{U})$ denotes the $\ell th$-column of $A(\xi;\overline{U})$, condition \eqref{eq:symmprop} implies that 
\begin{align}
\mathcal{S}_{k}(\overline{U})\cdot \mathcal{A}^{\ell}(\xi;\overline{U})=\mathcal{S}_{\ell}(\overline{U})\cdot \mathcal{A}^{k}(\xi;\overline{U}),\quad\forall\quad k\neq \ell,\quad k,\ell=1,..,8,\quad\mbox{for all}\quad \xi\in\mathbb{S}^{2},\label{eq:symmetrizationeq}
\end{align}
where, $a\cdot b$ stands as the inner product between the vectors $a$ and $b$ in $\mathbb{R}^{8}$. Observe that \eqref{eq:symmetrizationeq} is made of $28$ equations. We refer to each of these equations as the $(k:\ell)$-equation. For simplicity, we will use the following notation for the components of $A(\xi;\overline{U})$ and $\mathcal{Q}_{1,-1}(\xi;\bar{q})$:
\begin{equation}
\label{eq:simplenotation}
\begin{aligned}
&\frac{p_{\rho}(\bar{\rho},\bar{\theta})}{\bar{\rho}}=:\alpha,\quad\frac{\bar{\theta}p_{\theta}(\bar{\rho},\bar{\theta})}{\bar{\rho}e_{\theta}(\bar{\rho},\bar{\theta})}=:\beta,\quad\frac{1}{\bar{\rho}e_{\theta}(\bar{\rho},\bar{\theta})}=\gamma,\quad\frac{\kappa(\bar{\rho},\bar{\theta})}{\tau}=:\delta,\quad\,\frac{p_{\theta}(\bar{\rho},\bar{\theta})}{\bar{\rho}}=:\eta\\
&\xi_{2}\bar{q}_{1}-\xi_{1}\bar{q}_{2}=:N_{12}(\xi;\bar{q}),\quad\xi_{3}\bar{q}_{1}-\xi_{1}\bar{q}_{3}=:N_{13}(\xi;\bar{q}),\quad\mbox{and}\quad \xi_{3}\bar{q}_{2}-\xi_{2}\bar{q}_{3}=:N_{23}(\xi;\bar{q}).
\end{aligned}
\end{equation}
By \eqref{eq:Axi} and \eqref{eq:simplenotation} we can decompose  \eqref{eq:symmetrizationeq} as the following set of equations
\begin{align}
&s_{11}\xi_{1}\bar{\rho}+s_{15}\xi_{1}\beta+s_{17}N_{12}(\xi;\bar{q})+s_{18}N_{13}(\xi;\bar{q})=\alpha\left(s_{22},s_{23},s_{24}\right)\cdot(\xi_{1},\xi_{2},\xi_{3}),\tag{1:2}\label{12}\\
&s_{11}\xi_{2}\bar{\rho}+s_{15}\xi_{2}\beta-s_{16}N_{12}(\xi;\bar{q})+s_{18}N_{23}(\xi;\bar{q})=\alpha\left(s_{23},s_{33},s_{34}\right)\cdot(\xi_{1},\xi_{2},\xi_{3}),\tag{1:3}\label{13}\\
&s_{11}\xi_{3}\bar{\rho}+s_{15}\xi_{3}\beta-s_{16}N_{13}(\xi;\bar{q})-s_{17}N_{23}(\xi;\bar{q})=\alpha\left(s_{24},s_{34},s_{44}\right)\cdot(\xi_{1},\xi_{2},\xi_{3}),\tag{1:4}\label{14}\\
&\left[\eta\left(s_{12},s_{13},s_{14}\right)+\delta\left(s_{16},s_{17},s_{18}\right)\right]\cdot\left(\xi_{1},\xi_{2},\xi_{3}\right)=\alpha\left(s_{25},s_{35},s_{45}\right)\cdot(\xi_{1},\xi_{2},\xi_{3}),\tag{1:5}\label{15}\\
&s_{15}\xi_{1}\gamma=\alpha\left(s_{26},s_{36},s_{46}\right)\cdot(\xi_{1},\xi_{2},\xi_{3}),\tag{1:6}\label{16}\\
&s_{15}\xi_{2}\gamma=\alpha\left(s_{27},s_{37},s_{47}\right)\cdot(\xi_{1},\xi_{2},\xi_{3}),\tag{1:7}\label{17}\\
&s_{15}\xi_{3}\gamma=\alpha\left(s_{28},s_{38},s_{48}\right)\cdot(\xi_{1},\xi_{2},\xi_{3}),\tag{1:8}\label{18}\\
&s_{12}\xi_{2}\bar{\rho}+s_{25}\xi_{2}\beta-s_{26}N_{12}(\xi;\bar{q})+s_{28}N_{23}(\xi;\bar{q})=s_{13}\xi_{1}\bar{\rho}+s_{35}\xi_{1}\beta+s_{37}N_{12}(\xi;\bar{q})+s_{38}N_{13}(\xi;\bar{q}),\tag{2:3}\label{23}\\
& s_{12}\xi_{3}\bar{\rho}+s_{25}\xi_{3}\beta-s_{26}N_{13}(\xi;\bar{q})-s_{27}N_{23}(\xi;\bar{q})=s_{14}\xi_{1}\bar{\rho}+s_{45}\xi_{1}\beta+s_{47}N_{12}(\xi;\bar{q})+s_{48}N_{13}(\xi;\bar{q}),\tag{2:4}\label{24}\\
&\left[\eta\left(s_{22},s_{23},s_{24}\right)+\delta\left(s_{26},s_{27},s_{28}\right)\right]\cdot(\xi_{1},\xi_{2},\xi_{3})=s_{15}\xi_{1}\bar{\rho}+s_{55}\xi_{1}\beta+s_{57}N_{12}(\xi;\bar{q})+s_{58}N_{13}(\xi;\bar{q}),\tag{2:5}\label{25}\\
&s_{25}\xi_{1}\gamma=s_{16}\xi_{1}\bar{\rho}+s_{56}\xi_{1}\beta+s_{67}N_{12}(\xi;\bar{q})+s_{68}N_{13}(\xi;\bar{q}),\tag{2:6}\label{26}\\
&s_{25}\xi_{2}\gamma=s_{17}\xi_{1}\bar{\rho}+s_{57}\xi_{1}\beta+s_{77}N_{12}(\xi;\bar{q})+s_{78}N_{13}(\xi;\bar{q}),\tag{2:7}\label{27}\\
&s_{25}\xi_{3}\gamma=s_{18}\xi_{1}\bar{\rho}+s_{58}\xi_{1}\beta+s_{78}N_{12}(\xi;\bar{q})+s_{88}N_{13}(\xi;\bar{q}),\tag{2:8}\label{28}\\
&s_{13}\xi_{3}\bar{\rho}+s_{35}\xi_{3}\beta-s_{36}N_{13}(\xi;\bar{q})-s_{37}N_{23}(\xi;\bar{q})=s_{14}\xi_{2}\bar{\rho}+s_{45}\xi_{2}\beta-s_{46}N_{12}(\xi;\bar{q})+s_{48}N_{23}(\xi;\bar{q}),\tag{3:4}\label{34}\\
&\left[\eta\left(s_{23},s_{33},s_{34}\right)+\delta\left(s_{36},s_{37},s_{38}\right)\right](\xi_{1},\xi_{2},\xi_{3})=s_{15}\xi_{2}\bar{\rho}+s_{55}\xi_{2}\beta-s_{56}N_{12}(\xi;\bar{q})+s_{58}N_{23}(\xi;\bar{q}),\tag{3:5}\label{35}\\
&s_{35}\xi_{1}\gamma=s_{16}\xi_{2}\bar{\rho}+s_{56}\xi_{2}\beta-s_{66}N_{12}(\xi;\bar{q})+s_{68}N_{23}(\xi;\bar{q})\tag{3:6}\label{36}\\
&s_{35}\xi_{2}\gamma=s_{17}\xi_{2}\bar{\rho}+s_{57}\xi_{2}\beta-s_{67}N_{12}(\xi;\bar{q})+s_{78}N_{23}(\xi;\bar{q}),\tag{3:7}\label{37}\\
&s_{35}\xi_{3}\gamma=s_{18}\xi_{2}\bar{\rho}+s_{58}\xi_{2}\beta-s_{68}N_{12}(\xi;\bar{q})+s_{88}N_{23}(\xi;\bar{q}),\tag{3:8}\label{38}\\
&\left[\eta\left(s_{24},s_{34},s_{44}\right)+\delta\left(s_{46},s_{47},s_{48}\right) \right]\cdot(\xi_{1},\xi_{2},\xi_{3})=s_{15}\xi_{3}\bar{\rho}+s_{55}\xi_{3}\beta-s_{56}N_{13}(\xi;\bar{q})-s_{57}N_{23}(\xi;\bar{q}),\tag{4:5}\label{45}\\
&s_{45}\xi_{1}\gamma=s_{16}\xi_{3}\bar{\rho}+s_{56}\xi_{3}\beta-s_{66}N_{13}(\xi;\bar{q})-s_{67}N_{23}(\xi;\bar{q}),\tag{4:6}\label{46}\\
&s_{45}\xi_{2}\gamma=s_{17}\xi_{3}\bar{\rho}+s_{57}\xi_{3}\beta-s_{67}N_{13}(\xi;\bar{q})-s_{77}N_{23}(\xi;\bar{q}),\tag{4:7}\label{47}\\
&s_{45}\xi_{3}\gamma=s_{18}\xi_{3}\bar{\rho}+s_{58}\xi_{3}\beta-s_{68}N_{13}(\xi;\bar{q})-s_{78}N_{23}(\xi;\bar{q}),\tag{4:8}\label{48}\\
&s_{55}\xi_{1}\gamma=\left[\eta\left(s_{26},s_{36},s_{46}\right)+\delta\left(s_{66},s_{67},s_{68}\right)\right]\cdot\left(\xi_{1},\xi_{2},\xi_{3}\right),\tag{5:6}\label{56}\\
&s_{55}\xi_{2}\gamma=\left[\eta\left(s_{27},s_{37},s_{47}\right)+\delta\left(s_{67},s_{77},s_{78}\right)\right]\cdot(\xi_{1},\xi_{2},\xi_{3}),\tag{5:7}\label{57}\\
&s_{55}\xi_{3}\gamma=\left[\eta\left(s_{28},s_{38},s_{48}\right)+\delta\left(s_{68},s_{78},s_{88}\right)\right]\cdot(\xi_{1},\xi_{2},\xi_{3}),\tag{5:8}\label{58}\\
&s_{56}\xi_{2}\gamma=s_{57}\xi_{1}\gamma,\tag{6:7}\label{67}\\
&s_{56}\xi_{3}\gamma=s_{58}\xi_{1}\gamma,\tag{6:8}\label{68}\\
&s_{57}\xi_{3}\gamma=s_{58}\xi_{2}\gamma.\tag{7:8}\label{78}
\end{align}
Assume that $\mathcal{G}=(G_{i})_{i=1}^{3}$ is a tridimensional vector with components independents of $\xi\in\mathbb{S}^{2}$. Then, it holds that
\begin{align}
	\mathcal{G}\cdot\xi=0\quad\forall~\xi\in\mathbb{S}^{2},\quad\mbox{implies}\quad G_{i}=0\quad\mbox{for all}\quad i=1,2,3.\label{eq:ortosphere}
\end{align} 
By applying \eqref{eq:ortosphere} to equations \eqref{16}, \eqref{17}, \eqref{18}, \eqref{67}, \eqref{68} and \eqref{78}, we obtain,
\begin{align}
s_{27}=s_{28}=s_{36}=s_{38}=s_{46}=s_{47}=s_{56}=s_{57}=s_{58}=0,\label{eq:firstbadge}
\end{align}
and $s_{15}\gamma=\alpha s_{26}=\alpha s_{37}=\alpha s_{48}$. Evaluating \eqref{46} and \eqref{48} in $\xi=\hat{e}_{2}$ and \eqref{36} in $\xi=\hat{e}_{3}$ yields the relations
\begin{align*}
-s_{67}N_{23}(\hat{e}_{2};\bar{q})=0,\quad-s_{78}N_{23}(\hat{e}_{2};\bar{q})=0,\quad s_{68}N_{23}(\hat{e}_{3};\bar{q})=0.
\end{align*}
Since $N_{23}(\hat{e}_{2},\bar{q})=-\bar{q}_{3}\neq 0$ and $N(\hat{e}_{3};\bar{q})=\bar{q}_{2}\neq0$ it follows that
\begin{align}
s_{67}=s_{68}=s_{78}=0.\label{eq:secondbadge}
\end{align}
By using \eqref{eq:firstbadge} and \eqref{eq:secondbadge} into equations \eqref{25}, \eqref{26}, \eqref{35}, \eqref{37}, \eqref{38}, \eqref{46}, \eqref{47} and \eqref{48} we obtain
\begin{align}
&\left[\eta\left(s_{22},s_{23},s_{24}\right)+\delta\left(s_{26},0,0\right)\right]\cdot(\xi_{1},\xi_{2},\xi_{3})=s_{15}\xi_{1}\bar{\rho}+s_{55}\xi_{1}\beta,\tag{2:5'}\label{25p}\\
&s_{25}\gamma=s_{16}\bar{\rho},\tag{2:6'}\label{26p}\\
&\left[\eta\left(s_{23},s_{33},s_{34}\right)+\delta\left(0,s_{37},0\right)\right]\cdot(\xi_{1},\xi_{2},\xi_{3})=s_{15}\xi_{2}\bar{\rho}+s_{55}\xi_{2}\beta,\tag{3:5'}\label{35p}\\
& s_{35}\gamma=s_{17}\bar{\rho},\tag{3:7'}\label{37p}\\
&s_{35}\xi_{3}\gamma=s_{18}\xi_{2}\bar{\rho}+s_{88}N_{23}(\xi;\bar{q}),\tag{3:8'}\label{38p}\\
&s_{45}\xi_{1}\gamma=s_{16}\xi_{3}\bar{\rho}-s_{66}N_{13}(\xi;\bar{q}),\tag{4:6'}\label{46p}\\
&s_{45}\xi_{2}\gamma=s_{17}\xi_{3}\bar{\rho}-s_{77}N_{23}(\xi;\bar{q}),\tag{4:7'}\label{47p}\\
&s_{45}\gamma=s_{18}\bar{\rho}\tag{4:8'}\label{48p}.
\end{align}
In particular, by \eqref{eq:ortosphere}, \eqref{25p} and \eqref{35p}, 
\begin{align*}
s_{23}=s_{24}=s_{34}=0.
\end{align*}
Then, this last set of zeros turns \eqref{12}, \eqref{13} and \eqref{14} into the new equations, 
\begin{align*}
&s_{11}\xi_{1}\bar{\rho}+s_{15}\xi_{1}\beta+s_{17}N_{12}(\xi;\bar{q})+s_{18}N_{13}(\xi;\bar{q})=\alpha s_{22}\xi_{1},\tag{1:2'}\label{12p}\\
&s_{11}\xi_{2}\bar{\rho}+s_{15}\xi_{2}\beta-s_{16}N_{12}(\xi;\bar{q})+s_{18}N_{23}(\xi;\bar{q})=\alpha s_{33}\xi_{2},\tag{1:3'}\label{13p}\\
&s_{11}\xi_{3}\bar{\rho}+s_{15}\xi_{3}\beta-s_{16}N_{13}(\xi;\bar{q})-s_{17}N_{23}(\xi;\bar{q})=\alpha s_{44}\xi_{3}.\tag{1:4'}\label{14p}
\end{align*}
Now, evaluate \eqref{12p} in $\xi=\hat{e}_{3}$, then in $\xi=\hat{e}_{2}$, and \eqref{13p} in $\xi=\hat{e}_{1}$, use \eqref{eq:simplenotation} and the fact that $\bar{q}_{i}\neq 0$ for all $i=1,2,3$ to obtain that $s_{16}=s_{17}=_{18}=0$. Then, by \eqref{26p}, \eqref{37p} and \eqref{48p}, $s_{25}=s_{35}=s_{45}=0$.
Finally, equations \eqref{38p}, \eqref{46p} and \eqref{47p} become, 
\begin{align}
s_{88}N_{23}(\xi;\bar{q})=0,\quad s_{66}N_{13}(\xi;\bar{q})=0,\quad s_{77}N_{23}(\xi;\bar{q})=0\quad\mbox{for all}\quad\xi\in\mathbb{S}^{2}.\label{eq:nonsymmetric}
\end{align}
Therefore, by \eqref{eq:simplenotation} and the assumption $q_{i}\neq 0$ for all $i=1,2,3$, we deduce that $s_{66}=s_{77}=s_{88}=0$. This is a contradiction with \eqref{eq:positivity}. This concludes the argument.
\end{proof}
Consider the following definition.
\begin{defi}
\label{symbolicsymm}
A symbolic symmetrizer associated with $A(\xi;V(x,t))$, is a smooth mapping $S:\mathbb{R}^{3}\times\mathbb{R}_{+}\times\left(\mathbb{R}^{3}\setminus\{0\}\right)\rightarrow M_{8}(\mathbb{C})$, homogeneous of degree 0 in its last variable $\xi$, bounded as well as all its derivatives with respect to $(x,t;\xi)$ on $|\xi|=1$, such that, for all $(x,t;\xi)$, 
\begin{align*}
	S(x,t;\xi)^{\ast}=S(x,t;\xi)\geq\delta(x,t;\xi),
\end{align*}
for some positive $\delta(x,t;\xi)$ with the property that, for any compact set $\mathcal{K}\in\mathbb{R}^{3}$,
\begin{align}
	\label{eq:locallower}
	\inf_{\substack{(x,t)\in\mathcal{K}\times[0,T]\\ |\xi|=1}}\delta(x,t;\xi)=:d(\mathcal{K})>0,
\end{align}
and 
\begin{align*}
	S(x,t;\xi)A(\xi;V(x,t))=A(\xi;V(x,t))^{\ast}S(x,t;\xi).
\end{align*}
\end{defi}
As consequence of the following lemmas, one can show the existence of a symbolic symmetrizer for system \eqref{eq:stronghyp}.
\begin{lema}
\label{constmultip}
Under the thermodynamical assumptions (\textbf{C1})-(\textbf{C2}), the algebraic multiplicity of the characteristic roots of system \eqref{eq:1}-\eqref{eq:objder} is independent of $U\in\mathcal{O}$ and $\xi\in\mathbb{S}^{2}$.
\end{lema}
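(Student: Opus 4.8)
The plan is to compute the characteristic polynomial of the symbol $A(\xi;U)$ in \eqref{eq:Axi} in closed form and simply read off the multiplicities. The first observation is that $A(\xi;U)=(v\cdot\xi)\,I_{8}+B(\xi;\rho,\theta,q)$, where the matrix $B$ is independent of $v$; since the two matrices differ by a scalar multiple of the identity, $\det\big(\mu I_{8}-A(\xi;U)\big)=\det\big((\mu-v\cdot\xi)I_{8}-B(\xi;\rho,\theta,q)\big)$, so everything reduces to the spectrum of $B$. Next I would reduce to the case $\xi=\hat{e}_{1}$. The symbol is covariant under spatial rotations, namely $\mathcal{R}\,A(\xi;U)\,\mathcal{R}^{-1}=A\big(R\xi;\rho,Rv,\theta,Rq\big)$ for $R\in SO(3)$ with $\mathcal{R}=\operatorname{diag}(1,R,1,R)$; this is evident block by block except for $\mathcal{Q}_{1,-1}(\xi;q)$, and for that block it follows from the identity $\mathcal{Q}_{1,-1}(\xi;q)\,w=(q\cdot w)\xi-(\xi\cdot w)q=w\times(\xi\times q)$, which makes the rotational covariance manifest. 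Choosing $R$ with $R\xi=\hat{e}_{1}$ shows that the characteristic polynomial of $A(\xi;U)$ equals that of $A(\hat{e}_{1};\,\cdot\,)$ at a state that again ranges over all of $\mathcal{O}$, so no generality is lost.

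With $\xi=\hat{e}_{1}$ the matrix $B$ is very sparse: in the ordering $(\rho,v_{1},v_{2},v_{3},\theta,q_{1},q_{2},q_{3})$ the two columns attached to $q_{2},q_{3}$ vanish identically and the two rows attached to $v_{2},v_{3}$ vanish identically. Laplace-expanding $\det(\lambda I_{8}-B)$ first along those two columns and then along those two rows peels off a factor $\lambda^{4}$ and leaves a $4\times4$ tridiagonal (Jacobi) determinant in the variables $(\rho,v_{1},\theta,q_{1})$. Evaluating it by the three-term recurrence yields
\begin{align*}
\det(\lambda I_{8}-B)=\lambda^{4}\big(\lambda^{4}-b\,\lambda^{2}+c\big),\qquad b:=p_{\rho}+\frac{\theta p_{\theta}^{2}}{\rho^{2}e_{\theta}}+\frac{\kappa}{\rho e_{\theta}\tau},\qquad c:=\frac{p_{\rho}\,\kappa}{\rho e_{\theta}\tau},
\end{align*}
with $p_{\rho},p_{\theta},e_{\theta},\kappa$ evaluated at $(\rho,\theta)$; note in passing that $b$ and $c$ do not depend on $q$. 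Setting $a_{1}:=p_{\rho}$, $a_{2}:=\theta p_{\theta}^{2}/(\rho^{2}e_{\theta})$, $a_{3}:=\kappa/(\rho e_{\theta}\tau)$ — all strictly positive by (\textbf{C2}) and $\tau>0$ — one has $b=a_{1}+a_{2}+a_{3}>0$, $c=a_{1}a_{3}>0$, and the key inequality
\begin{align*}
b^{2}-4c=(a_{1}-a_{3})^{2}+a_{2}\big(a_{2}+2a_{1}+2a_{3}\big)>0.
\end{align*}
Hence the biquadratic $\lambda^{4}-b\lambda^{2}+c$ has two distinct positive roots in $\lambda^{2}$, say $\sigma_{\pm}=\tfrac12\big(b\pm\sqrt{b^{2}-4c}\big)$ with $\sigma_{+}>\sigma_{-}>0$, hence four simple nonzero real roots $\pm\sqrt{\sigma_{\pm}}$ in $\lambda$; and $\lambda=0$ is not among them because $c\neq0$.

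Assembling the pieces, the characteristic roots of $A(\xi;U)$ are $v\cdot\xi$, with algebraic multiplicity $4$, together with the four simple roots $v\cdot\xi\pm\sqrt{\sigma_{\pm}}$; since $\sigma_{+}\neq\sigma_{-}$ and both are positive, these five numbers are pairwise distinct for every $U\in\mathcal{O}$ and every $\xi\in\mathbb{S}^{2}$, so the vector of algebraic multiplicities is the constant $(4,1,1,1,1)$, which is exactly the claim. The computation is essentially routine; the two places that require genuine care are the rotational reduction (which is what allows one to work with the sparse matrix $B(\hat{e}_{1};\cdot)$ instead of the full symbol) and the strict positivity of the discriminant $b^{2}-4c$. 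The latter is where assumption (\textbf{C2}) really enters — and it is $p_{\theta}>0$, not merely $p_{\theta}\geq0$, that is needed: were $p_{\theta}$ allowed to vanish, then $a_{2}=0$ and $b^{2}-4c=(a_{1}-a_{3})^{2}$ could be zero, forcing $\sigma_{+}=\sigma_{-}$ and changing the multiplicity pattern to $(4,2,2)$.
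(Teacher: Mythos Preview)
Your proof is correct and follows the same overall strategy as the paper: factor the characteristic polynomial of $A(\xi;U)$ as $(\mu-v\cdot\xi)^{4}$ times a biquadratic, then use assumption (\textbf{C2}) to show that the biquadratic has four simple nonzero real roots. Your $\sigma_{\pm}$ are exactly the paper's $z_{\pm}^{2}$, and your discriminant inequality $b^{2}-4c=(a_{1}-a_{3})^{2}+a_{2}(a_{2}+2a_{1}+2a_{3})>0$ is equivalent to the paper's $z_{+}^{2}>z_{-}^{2}>0$.

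The one genuine methodological difference is how the characteristic polynomial is obtained. The paper imports the factorization $\det\big(A(\xi;U)-\eta I\big)=\rho^{3}\tau^{2}(\xi\cdot v-\eta)^{4}P_{0}(\xi;U,\eta)$ from an external reference, valid for arbitrary $\xi\in\mathbb{S}^{2}$. You instead derive it self-containedly: first strip off the shift $v\cdot\xi$, then exploit the rotational covariance $\mathcal{R}A(\xi;U)\mathcal{R}^{-1}=A(R\xi;\rho,Rv,\theta,Rq)$ (your identity $\mathcal{Q}_{1,-1}(\xi;q)w=(q\cdot w)\xi-(\xi\cdot w)q$ makes this transparent for the only nontrivial block) to reduce to $\xi=\hat{e}_{1}$, where the sparse structure lets you peel off $\lambda^{4}$ and reduce to a $4\times4$ Jacobi determinant. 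This buys you a proof that does not depend on outside computations, and incidentally explains \emph{why} the characteristic polynomial is independent of $q$ and of the direction of $\xi$. The paper's route is shorter on the page but less illuminating; yours is more elementary and makes the structure visible.
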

\begin{proof}
For any  $U\in\mathcal{O}$ and $\xi\in\mathbb{S}^{2}$, the characterstic equation of system  \eqref{eq:1}-\eqref{eq:objder} is 
\begin{align*}
	\det\left(A(\xi;U)-\eta\mathbb{I}_{3}\right)=\rho^{3}\tau^{2}\left(\xi\cdot v-\eta\right)^{4}P_{0}(\xi;U,\eta)=0,
\end{align*}
where 
\begin{align*}
	P_{0}(\xi;U,\eta)=\rho e_{\theta}\tau\left(\xi\cdot v-\eta\right)^{4}-\left(\tau\rho e_{\theta}p_{\rho}+\frac{\tau\theta p_{\theta}^{2}}{\rho}\kappa\right)\left(\xi\cdot v-\eta\right)^{2}+\kappa p_{\rho},
\end{align*}
see \cite[Equations 5.5, 5.6 and Theorem 6.1]{angeleshyp}. Therefore, the algebraic multiplicity of the root $\eta_{0}(\xi;U)=v\cdot\xi$ is at least  equal four. From the equation $P_{0}(\xi;U,\eta)=0$ we obtain four real roots given by 
\begin{equation}
	\label{eq:realroots}
	\begin{aligned}
		&\eta_{1}(\xi;U)=v\cdot\xi+\sqrt{z_{+}^{2}},\quad\eta_{2}(\xi;U)=v\cdot\xi+\sqrt{z_{-}^{2}},\\
		&\eta_{3}(\xi;U)=v\cdot\xi-\sqrt{z_{+}^{2}},\quad\mbox{and}\quad\eta_{4}(\xi;U)=v\cdot\xi-\sqrt{z_{-}^{2}},
	\end{aligned}
\end{equation}
where $z_{+}^{2}$ and $z_{-}^{2}$ are defined as
\begin{equation}
	\label{eq:zetas}
	\begin{aligned}
		z_{+}^{2}&=\frac{1}{2}\left(p_{\rho}+\frac{\theta p_{\theta}^{2}}{\rho^{2}e_{\theta}}+\frac{\kappa}{\rho e_{\theta}\tau}\right)+\frac{1}{2}\sqrt{\left(p_{\rho}+\frac{\theta p_{\theta}^{2}}{\rho^{2}e_{\theta}}+\frac{\kappa}{\rho e_{\theta}\tau}\right)^{2}-\frac{4p_{\rho}\kappa}{\rho e_{\theta}\tau}},\\
		z_{-}^{2}&=\frac{1}{2}\left(p_{\rho}+\frac{\theta p_{\theta}^{2}}{\rho^{2}e_{\theta}}+\frac{\kappa}{\rho e_{\theta}\tau}\right)-\frac{1}{2}\sqrt{\left(p_{\rho}+\frac{\theta p_{\theta}^{2}}{\rho^{2}e_{\theta}}+\frac{\kappa}{\rho e_{\theta}\tau}\right)^{2}-\frac{4p_{\rho}\kappa}{\rho e_{\theta}\tau}},
	\end{aligned}
\end{equation}
see \cite[section 3]{angelesnon}. By the thermodynamical assumptions, $z_{+}^{2}>z_{-}^{2}>0$ and thus,
\begin{align}
	\eta_{3}(\xi;U)<\eta_{4}(\xi;U)<\eta_{0}(\xi;U)<\eta_{2}(\xi;U)<\eta_{1}(\xi;U),\quad\mbox{for every}\quad\xi\in\mathbb{S}^{2}\quad\mbox{and}\quad U\in\mathcal{O}.\label{eq:eigeninequality}
\end{align} 
In consequence, the algebraic multiplicity of each root in the set  $\left\lbrace\eta_{j}(\xi;U)\right\rbrace_{j=1}^{4}$ is equal to one, which in turn implies that $\eta_{0}(\xi;U)$ is a root of algebraic multiplicity equal to four, for any choice of $\xi\in\mathbb{S}^{2}$ and $U\in\mathcal{O}$.
\end{proof}
\begin{lema}
\label{lowests}
Let $V=V(x,t)\in\mathcal{O}$ be a continuous state for which there is an open bounded convex set $\mathcal{C}$ with the following properties, 
\begin{align}
	\label{eq:conseparation}
	\overline{\mathcal{C}}\subset\mathcal{O}\quad\mbox{and}\quad V(x,t)\in\mathcal{C},\quad\mbox{for all}\quad(x,t)\in\mathbb{R}^{3}\times[0,T]\quad\mbox{for some}\quad T>0.
\end{align}
If $V=(\rho,v,\theta,q)^{\top}$, define $z_{+}^{2}$ and $z_{-}^{2}$ as in \eqref{eq:zetas}. Then, there are positive constants $\delta_{1},\delta_{2},\delta_{3}$, $\delta_{4}$, depending only on $\mathcal{C}$ and the thermal relaxation $\tau$, for which the following inequalities hold true,
\begin{align}
\delta_{1}\leq& z_{+}^{2}\left(\rho(x,t),\theta(x,t)\right)\leq\delta_{2},\label{eq:zeta1}\\
\delta_{3}\leq&z_{-}^{2}\left(\rho(x,t),\theta(x,t)\right),\label{eq:zeta2}\\
\delta_{4}\leq&\sqrt{z_{+}^{2}\left(\rho(x,t),\theta(x,t)\right)}-\sqrt{z_{-}^{2}\left(\rho(x,t),\theta(x,t)\right)},\label{eq:zeta3}
\end{align}
for all $(x,t)\in\mathbb{R}^{3}\times[0,T]$.
\end{lema}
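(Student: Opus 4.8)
The statement is purely about the two scalar quantities $z_\pm^2$ built from the thermodynamic functions $p_\rho,\,p_\theta,\,e_\theta,\,\kappa$ evaluated along $V=V(x,t)$, so the whole argument is an exercise in extracting uniform bounds from compactness. The key observation is that $z_\pm^2$ are continuous functions of $(\rho,\theta)$ on $\mathcal D$ (they are continuous functions of $p_\rho,\,\theta p_\theta^2/\rho^2 e_\theta,\,\kappa/\rho e_\theta\tau$, each continuous on $\mathcal D$ by (\textbf{C1}), and the square root under the radical in \eqref{eq:zetas} is nonnegative by the AM–GM-type inequality $(a+b+c)^2 - 4\,(\text{product of two of them})\ge 0$ once one checks the discriminant is a perfect-square-dominating expression — in fact $\big(p_\rho+\tfrac{\theta p_\theta^2}{\rho^2 e_\theta}+\tfrac{\kappa}{\rho e_\theta\tau}\big)^2 - \tfrac{4 p_\rho\kappa}{\rho e_\theta\tau}\ge \big(p_\rho - \tfrac{\kappa}{\rho e_\theta\tau}\big)^2 + 2\,\tfrac{\theta p_\theta^2}{\rho^2 e_\theta}\big(p_\rho+\tfrac{\kappa}{\rho e_\theta\tau}\big)+\big(\tfrac{\theta p_\theta^2}{\rho^2 e_\theta}\big)^2\ge 0$ using (\textbf{C2})). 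So $z_\pm^2$ are well-defined, continuous, and — as already recorded in Lemma \ref{constmultip} via \eqref{eq:eigeninequality} — strictly positive on all of $\mathcal D$, with $z_+^2 > z_-^2 > 0$ pointwise.

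First I would note that the closed convex set $\overline{\mathcal C}$ is compact: it is closed by hypothesis and bounded because it is contained in... well, convex and bounded by the statement's hypothesis that $\mathcal C$ is an open bounded convex set, so $\overline{\mathcal C}$ is closed and bounded in $\mathbb R^{8}$, hence compact. The projection $\pi(\rho,v,\theta,q)=(\rho,\theta)$ maps $\overline{\mathcal C}$ to a compact subset $\mathcal K_0 := \pi(\overline{\mathcal C}) \subset \mathcal D$. Then the three functions
\[
(\rho,\theta)\longmapsto z_+^2,\qquad (\rho,\theta)\longmapsto z_-^2,\qquad (\rho,\theta)\longmapsto \sqrt{z_+^2}-\sqrt{z_-^2}
\]
are continuous and strictly positive on the compact set $\mathcal K_0$, so each attains a positive minimum and a finite maximum there. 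Setting $\delta_1 := \min_{\mathcal K_0} z_+^2 > 0$, $\delta_2 := \max_{\mathcal K_0} z_+^2 < \infty$, $\delta_3 := \min_{\mathcal K_0} z_-^2 > 0$, and $\delta_4 := \min_{\mathcal K_0}\big(\sqrt{z_+^2}-\sqrt{z_-^2}\big) > 0$ — the last minimum being strictly positive precisely because $z_+^2 > z_-^2$ pointwise on $\mathcal D$, so the integrand $\sqrt{z_+^2}-\sqrt{z_-^2}$ never vanishes on $\mathcal K_0$ — gives constants depending only on $\mathcal C$ (through $\mathcal K_0$) and on $\tau$ (which enters the formulas \eqref{eq:zetas}), but not on $(x,t)$.

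Finally, for any $(x,t)\in\mathbb R^3\times[0,T]$ the hypothesis \eqref{eq:conseparation} gives $V(x,t)\in\mathcal C\subset\overline{\mathcal C}$, hence $(\rho(x,t),\theta(x,t))\in\mathcal K_0$, and the three claimed inequalities \eqref{eq:zeta1}, \eqref{eq:zeta2}, \eqref{eq:zeta3} follow immediately by evaluating the extremal bounds at the point $(\rho(x,t),\theta(x,t))$. Note that continuity of $V$ itself is not actually needed for the estimates — only the containment $V(x,t)\in\mathcal C$ — but it does no harm to keep it as a hypothesis.

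I expect no serious obstacle: the only point requiring a moment's care is verifying that the discriminant under the square root in \eqref{eq:zetas} is nonnegative (so $z_\pm^2$ are real) and that $z_+^2 > z_-^2$ strictly, which together make $\delta_4 > 0$; both follow from (\textbf{C2}) and are in any case already implicit in \eqref{eq:eigeninequality} of Lemma \ref{constmultip}. Everything else is the standard "a positive continuous function on a compact set has a positive infimum" argument.
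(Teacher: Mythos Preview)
Your argument is correct and takes a genuinely different route from the paper. The paper proceeds constructively: it first extracts uniform two-sided bounds $\rho_0\le\rho\le\rho_1$, $\theta_0\le\theta\le\theta_1$, $M_1\le p_\rho,p_\theta,e_\theta,\kappa\le M_2$ from compactness of $\overline{\mathcal C}$, then writes $z_\pm^2$ in terms of auxiliary quantities $\Theta,\Phi$ and produces \emph{explicit} formulas for each $\delta_i$ via algebraic manipulations and the integral identities $\Theta-\sqrt{\Theta^2-\Phi}=\tfrac{\Phi}{2}\int_0^1(\Theta^2-\Phi+r\Phi)^{-1/2}\,dr$ and $\sqrt{z_+^2}-\sqrt{z_-^2}=\tfrac{z_+^2-z_-^2}{2}\int_0^1(z_-^2+r(z_+^2-z_-^2))^{-1/2}\,dr$. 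You instead invoke directly that $z_+^2$, $z_-^2$ and $\sqrt{z_+^2}-\sqrt{z_-^2}$ are continuous strictly positive functions on the compact set $\mathcal K_0=\pi(\overline{\mathcal C})\subset\mathcal D$, so their extrema furnish the $\delta_i$. Your approach is shorter and conceptually cleaner; the paper's approach has the advantage of yielding explicit constants in terms of the primitive bounds $\rho_0,\rho_1,\theta_0,\theta_1,M_1,M_2,\tau$, which could matter if one later needs to track how the $\delta_i$ depend on the data. For the purposes of Lemma~\ref{eigenseparation} and the symbolic-symmetrizer construction that follow, either argument suffices.
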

\begin{proof}
Assume \eqref{eq:conseparation} holds true for the state $V(x,t)=\left(\rho,v,\theta,q\right)(x,t)$. Since $\overline{\mathcal{C}}$ is a compact set contained in $\mathcal{O}$, there are positive constants $\rho_{0},\rho_{1},\theta_{0},\theta_{1}$ for which 
\begin{align*}
	\rho_{0}\leq \rho(x,t)\leq \rho_{1}\quad\mbox{and}\quad\theta_{0}\leq\theta(x,t)\leq\theta_{1}\quad\mbox{for all}\quad(x,t)\in\mathbb{R}^{3}\times[0,T].
\end{align*}
Then, by continuity of the state $V$ and the thermodynamical assumptions (\textbf{C1})-(\textbf{C2}), there are positive constants $M_{1}$ and $M_{2}$  such that,
\begin{align}
	\label{eq:continuity1} 
	M_{1}\leq p_{\rho}(\rho,\theta),p_{\theta}(\rho,\theta),e_{\theta}(\rho,\theta),\kappa(\rho,\theta)\leq M_{2}\quad\mbox{on the set}\quad\mathbb{R}^{3}\times[0,T].
\end{align}
Define the functions $\Theta,\Phi:\mathbb{R}^{3}\times[0,T]\rightarrow\mathbb{R}$ as 
\begin{align*}
	\Theta&:=p_{\rho}(\rho,\theta)+\frac{\theta p_{\theta}^{2}(\rho,\theta)}{\rho^{2}e_{\theta}(\rho,\theta)}+\frac{\kappa(\rho,\theta)}{\rho e_{\theta}(\rho,\theta)\tau},\\
	\Phi&:=\frac{4p_{\rho}(\rho,\theta)\kappa(\rho,\theta)}{\rho e_{\theta}(\rho,\theta)\tau}
\end{align*}
and notice that, by \eqref{eq:continuity1}, the inequalities
\begin{equation}
\label{eq:continuity2}
\begin{aligned}
		M_{1}+\frac{\theta_{0}M_{1}^{2}}{\rho_{1}^{2}M_{2}}+\frac{M_{1}}{\rho_{1}M_{2}\tau}\leq
	&\Theta(x,t)\leq M_{2}+\frac{\theta_{1}M_{2}^{2}}{\rho_{0}^{2}M_{1}}+\frac{M_{2}}{\rho_{0}M_{1}\tau}\\
	\frac{4M_{1}^{2}}{\rho_{1}M_{2}\tau}\leq&\Phi(x,t)\leq\frac{4M_{2}^{2}}{\rho_{0}M_{1}\tau},
\end{aligned}
\end{equation}
hold for all $(x,t)\in\mathbb{R}^{3}\times[0,T]$.
Since $\tfrac{1}{2}\Theta\leq z_{+}^{2}\leq\Theta$, \eqref{eq:zeta1} follows by taking
\begin{align*}
\delta_{1}:=\frac{1}{2}\left(	M_{1}+\frac{\theta_{0}M_{1}^{2}}{\rho_{1}^{2}M_{2}}+\frac{M_{1}}{\rho_{1}M_{2}\tau}\right)\quad\mbox{and}\quad \delta_{2}:=M_{2}+\frac{\theta_{1}M_{2}^{2}}{\rho_{0}^{2}M_{1}}+\frac{M_{2}}{\rho_{0}M_{1}\tau}.
\end{align*}
Observe that, 
 $z_{-}^{2}=\frac{1}{2}\left\lbrace\Theta-\sqrt{\Theta^{2}-\Phi}\right\rbrace$ where, 
\begin{align*}
	\Theta-\sqrt{\Theta^{2}-\Phi}=\int_{0}^{1}\frac{d}{dr}\sqrt{\Theta^{2}-\Phi+r\Phi}~dr=\frac{\Phi}{2}\int_{0}^{1}\frac{dr}{\sqrt{\Theta^{2}-\Phi+r\Phi}},
\end{align*}
and thus, 
\begin{align*}
	z_{-}^{2}\geq\frac{\Phi}{4}\int_{0}^{1}\frac{dr}{\sqrt{\Theta^{2}+\Phi}}=\frac{\Phi}{4\sqrt{\Theta^{2}+\Phi}}.
\end{align*}
Then, the inequality $\sqrt{\Theta^{2}+\Phi}\leq\sqrt{2}\left(\Theta+\sqrt{\Phi}\right)$, together with \eqref{eq:continuity2} imply that,
\begin{align*}
	z_{-}^{2}\left(\rho(x,t),\theta(x,t)\right)\geq\frac{M_{1}^{2}}{\sqrt{2}\rho_{1}M_{2}\tau}\left(M_{2}+\frac{\theta_{1}M_{2}^{2}}{\rho_{0}^{2}M_{1}}+\frac{M_{2}}{\rho_{0}M_{1}\tau}+\frac{2M_{2}}{\sqrt{\rho_{0}M_{1}\tau}}\right)^{-1}=:\delta_{3},
\end{align*}
for all $(x,t)\in\mathbb{R}^{3}\times[0,T]$.\\
In order to prove \eqref{eq:zeta3}, first notice that $z_{-}^{2}+r\left(z_{+}^{2}-z_{-}^{2}\right)\leq z_{+}^{2}$ holds for every $r\in[0,1]$. Hence, 
\begin{equation}
	\label{eq:lowest3}
	\begin{aligned}
		\sqrt{z_{+}^{2}}-\sqrt{z_{-}^{2}}=\int_{0}^{1}\frac{d}{dr}\sqrt{z_{-}^{2}+r(z_{+}^{2}-z_{-}^{2})}~dr&=\frac{z_{+}^{2}-z_{-}^{2}}{2}\int_{0}^{1}\frac{dr}{\sqrt{z_{-}^{2}+r(z_{+}^{2}-z_{-}^{2})}}\\
		&\geq\frac{z_{+}^{2}-z_{-}^{2}}{2\sqrt{z_{+}^{2}}}.
	\end{aligned}
\end{equation}
By \eqref{eq:zetas} and the standard inequality, $\left(p_{\rho}-\frac{\kappa}{\rho e_{\theta}\tau}\right)^{2}\geq 0$, it follows that
\begin{equation}
	\label{eq:lowest4}
	\begin{aligned}
		z_{+}^{2}-z_{-}^{2}&=\sqrt{p_{\rho}^{2}+\left(\frac{\kappa}{\rho e_{\theta}\tau}\right)^{2}-2\frac{p_{\rho}\kappa}{\rho e_{\theta}\tau}+\left(\frac{\theta p_{\theta}^{2}}{\rho^{2}e_{\theta}}\right)^{2}+\frac{2\theta p_{\theta}^{2}}{\rho^{2}e_{\theta}}\left(p_{\rho}+\frac{\kappa}{\rho e_{\theta}\tau}\right)}\\
		&\geq \sqrt{\left(\frac{\theta p_{\theta}^{2}}{\rho^{2}e_{\theta}}\right)^{2}+\frac{2\theta p_{\theta}^{2}}{\rho^{2}e_{\theta}}\left(p_{\rho}+\frac{\kappa}{\rho e_{\theta}\tau}\right)}\\
		&\geq\frac{\theta p_{\theta}^{2}}{\rho^{2}e_{\theta}}.
	\end{aligned}
\end{equation}
Therefore, as consequence of \eqref{eq:zeta1}, \eqref{eq:continuity1}, \eqref{eq:lowest3} and \eqref{eq:lowest4}, it is enough to take
\begin{align*}
	\delta_{4}:=\frac{\theta_{0}M_{1}^{2}}{2\rho_{1}^{2}M_{2}}\frac{1}{\sqrt{\delta_{2}}}.
\end{align*}
This completes the proof.
\end{proof}
\begin{lema}
\label{eigenseparation}
Let $V=V(x,t)\in\mathcal{O}$ be a continuous state satisfying \eqref{eq:conseparation}. If $\left\lbrace\eta_{j}(\xi;V(x,t))\right\rbrace_{j=0}^{4}$ is the set of eigenvalues of $A(\xi;V(x,t))$ then, there is a positive constant $\delta$, depending only on $\mathcal{C}$ and $\tau$, such that
\begin{align}
	\label{eq:spectralgap}
\inf_{\substack{(x,t)\in\mathbb{R}^{3}\times[0,T]\\ \xi\in\mathbb{R}^{3}\setminus\{0\}, j\neq k}}|\eta_{j}(\xi;V(x,t))-\eta_{k}(\xi;V(x,t))|\geq\delta.
\end{align}
\end{lema}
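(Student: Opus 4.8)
The plan is to read off the spectral gap directly from the explicit description of the characteristic roots obtained in Lemma~\ref{constmultip} and from the uniform two-sided control on $z_\pm^2$ provided by Lemma~\ref{lowests}. Write $V(x,t)=(\rho,v,\theta,q)(x,t)$ and, for $(x,t)\in\mathbb{R}^3\times[0,T]$, abbreviate $z_\pm^2=z_\pm^2(\rho(x,t),\theta(x,t))$. By \eqref{eq:realroots} the five distinct eigenvalues of $A(\xi;V(x,t))$ at $|\xi|=1$ are $\eta_0=v\cdot\xi$ (of algebraic multiplicity four), $\eta_1=v\cdot\xi+\sqrt{z_+^2}$, $\eta_2=v\cdot\xi+\sqrt{z_-^2}$, $\eta_3=v\cdot\xi-\sqrt{z_+^2}$ and $\eta_4=v\cdot\xi-\sqrt{z_-^2}$, and by \eqref{eq:eigeninequality} they obey $\eta_3<\eta_4<\eta_0<\eta_2<\eta_1$. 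Since the symbolic symmetrizer to be built in Definition~\ref{symbolicsymm} is homogeneous of degree zero in $\xi$, it is enough to prove the bound for $\xi\in\mathbb{S}^2$.

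The key observation is that the translation term $v\cdot\xi$ is common to all five eigenvalues, hence cancels in every pairwise difference. Consequently, for $j\neq k$ the gap $|\eta_j(\xi;V(x,t))-\eta_k(\xi;V(x,t))|$ equals one of the six expressions $\sqrt{z_+^2}$, $\sqrt{z_-^2}$, $\sqrt{z_+^2}-\sqrt{z_-^2}$, $\sqrt{z_+^2}+\sqrt{z_-^2}$, $2\sqrt{z_+^2}$, $2\sqrt{z_-^2}$, each of which is a function of $(\rho(x,t),\theta(x,t))$ only and is independent of $\xi$ and of $v$. I would then apply Lemma~\ref{lowests}: since $V$ satisfies \eqref{eq:conseparation}, there exist $\delta_1,\delta_3,\delta_4>0$, depending only on $\mathcal{C}$ and $\tau$, such that $z_+^2\geq\delta_1$, $z_-^2\geq\delta_3$ and $\sqrt{z_+^2}-\sqrt{z_-^2}\geq\delta_4$ throughout $\mathbb{R}^3\times[0,T]$. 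Each of the six expressions above is therefore bounded below by a positive constant assembled from $\delta_1,\delta_3,\delta_4$; taking $\delta:=\min\{\sqrt{\delta_1},\sqrt{\delta_3},\delta_4\}$ yields \eqref{eq:spectralgap} with exactly the dependence claimed in the statement.

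I do not expect any genuine obstacle here: the whole argument is bookkeeping over the ten pairwise gaps among the five characteristic roots, and the analytic substance has already been isolated in Lemmas~\ref{constmultip} and~\ref{lowests}. The only point that deserves a line of justification is the cancellation of $v\cdot\xi$, which is what makes each gap a function of $(\rho,\theta)$ alone and thus lets the uniform-in-$(x,t)$ estimates of Lemma~\ref{lowests} apply verbatim, without any auxiliary control on the velocity field or on the direction of $\xi$.
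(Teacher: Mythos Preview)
Your proposal is correct and follows essentially the same route as the paper's own proof: list the pairwise differences of the five roots (which all reduce to $\sqrt{z_+^2}$, $\sqrt{z_-^2}$, $\sqrt{z_+^2}\pm\sqrt{z_-^2}$, $2\sqrt{z_\pm^2}$ once the common shift $v\cdot\xi$ cancels), and then invoke the uniform lower bounds of Lemma~\ref{lowests}. Your choice $\delta=\min\{\sqrt{\delta_1},\sqrt{\delta_3},\delta_4\}$ is in fact slightly more careful than the paper's $\delta=\min\{\delta_1,\delta_3,\delta_4\}$, since Lemma~\ref{lowests} bounds $z_\pm^2$ (not $\sqrt{z_\pm^2}$) from below by $\delta_1,\delta_3$.
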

\begin{proof}
From \eqref{eq:realroots}, the following identities are true,
\begin{align*}
&|\eta_{0}-\eta_{1}|=|\eta_{0}-\eta_{3}|=\frac{1}{2}|\eta_{1}-\eta_{3}|=\sqrt{z_{+}^{2}},\quad|\eta_{0}-\eta_{2}|=|\eta_{0}-\eta_{4}|=\frac{1}{2}|\eta_{2}-\eta_{4}|=\sqrt{z_{-}^{2}},\\
&|\eta_{1}-\eta_{2}|=|\eta_{3}-\eta_{4}|=\sqrt{z_{+}^{2}}-\sqrt{z_{-}^{2}},\quad|\eta_{1}-\eta_{4}|=|\eta_{2}-\eta_{3}|=\sqrt{z_{+}^{2}}+\sqrt{z_{-}^{2}}.
\end{align*}
Then, the result is a direct consequence of \eqref{eq:zeta1}, \eqref{eq:zeta2} and \eqref{eq:zeta3} by taking $\delta:=\min\left\lbrace \delta_{1},\delta_{3},\delta_{4}\right\rbrace$.
\end{proof}
As consequence of the constant hyperbolicity of each linear system (Lemma \ref{constmultip}), the spectral separation (Lemma \ref{eigenseparation}) and the diagonalizability of the symbol $A(\xi;V(x,t))$ (see \cite{angeleshyp}), we can construct locally the symbolic symmetrizer as it is done in \cite{pseudofri} and \cite{kajitani}. 
Set $P(x,t;\lambda,\xi)$ as the determinant of $\lambda\mathbb{I}-A(\xi;V(x,t))$, $P_{\lambda}(x,t;\lambda,\xi)=\frac{\partial}{\partial\lambda}P(x,t;\lambda,\xi)$ and let $\Gamma$ be a Jordan curve which contains the characteristic roots of $A(\xi;V(x,t))$ but does not contain the poles of $PP_{\lambda}^{-1}(x,t;\lambda,\xi)$. Then, a symbolic symmetrizer is given by the formula
\begin{align*}
	S(x,t;\xi)=\frac{1}{2\pi i}\oint_{\Gamma}\left(\lambda\mathbb{I}-A(\xi;V(x,t))^{\ast}\right)^{-1}\left(\lambda\mathbb{I}-A(\xi;V(x,t))\right)^{-1}P_{\lambda}(x,t;\lambda,\xi)^{-1}P(x,t;\lambda,\xi)d\lambda.
\end{align*}
In \cite{pseudofri}, Friedrichs shows that $S(x,t,\xi)$ satisfies the propierties of Definition \ref{symbolicsymm}. Moreover, by the uniform lower bound in the spectral gap (i.e. \eqref{eq:spectralgap}), instead of \eqref{eq:locallower}, it can be shown that
\begin{align*}
	\inf_{\substack{(x,t)\in\mathbb{R}^{3}\times[0,T]\\ |\xi|=1}}\delta(x,t;\xi)=:d_{0}>0,
\end{align*}
see \cite{kajitani} and the references therein. If $V\in\mathcal{O}$ is constant or independent of $x$, for sufficiently large values of $|x|$, the existence of a symbolic symmetrizer can also be found in \cite[Theorem 2.3]{benzoserre}. Furthermore, the smoothness of $V$ is not necessary for the existence of a symbolic symmetrizer, see \cite{metivierl2} and the references therein.\\
Another consequence of Lemmas \ref{constmultip} and \ref{eigenseparation} is the following result, stated without proof.

\begin{theo}[Kajitani \cite{kajitani}]
	\label{kajitanil2}
Consider the linear system of equations \eqref{eq:stronghyp}. Assume that the characteristic roots of $\sum_{j}\widetilde{A}^{j}(x,t)\xi_{j}$ are real, of constant algebraic multiplicity and satisfy \eqref{eq:spectralgap}. Then, the following statements are true:
\begin{itemize}
\item [(a)] The Cauchy problem for \eqref{eq:stronghyp} is $L^{2}$-well posed.
\item [(b)] The symbol matrix $\widehat{A}(\xi;x,t):=\sum_{j}\widetilde{A}^{j}(x,t)\xi_{j}$ is diagonalizable for any $(x,t;\xi)\in\mathbb{R}^{d}\times[0,T]\times\mathbb{S}^{d-1}$.
\end{itemize}
\qed
\end{theo}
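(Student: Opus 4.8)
The plan is to construct a microlocal symmetrizer for the symbol $\widehat{A}(\xi;x,t)=\sum_{j}\widetilde{A}^{j}(x,t)\xi_{j}$ that is $C^{\infty}$, homogeneous of degree $0$ in $\xi$, uniformly bounded together with all its derivatives on $|\xi|=1$ and uniformly positive definite, and then to close an $L^{2}$ energy estimate by pseudodifferential calculus; this yields (a), and statement (b) follows along the way. By homogeneity it suffices to restrict $\xi$ to $\mathbb{S}^{d-1}$. Fix a compact $\mathcal{K}\subset\mathbb{R}^{d}$ and the slab $[0,T]$; there the $C^{\infty}$ coefficients $\widetilde{A}^{j},\widetilde{D}$ are bounded with all derivatives (in the applications at hand this holds globally, since the underlying state stays in a fixed compact subset of $\mathcal{O}$). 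Using the uniform spectral gap \eqref{eq:spectralgap}, surround each distinct characteristic root $\eta_{j}(\xi;x,t)$ by a circle $\gamma_{j}$ of fixed radius $<\delta/2$ enclosing $\eta_{j}$ and no other root, and set
\begin{align*}
\Pi_{j}(x,t;\xi)=\frac{1}{2\pi i}\oint_{\gamma_{j}}\bigl(\lambda\mathbb{I}-\widehat{A}(\xi;x,t)\bigr)^{-1}\,d\lambda .
\end{align*}
Since, by Lemmas \ref{constmultip} and \ref{eigenseparation}, the roots never collide and keep constant algebraic multiplicity $m_{j}$, the $\Pi_{j}$ are spectral projectors of constant rank $m_{j}$, are $C^{\infty}$ in $(x,t;\xi)$ with all derivatives bounded on $\mathcal{K}\times[0,T]\times\mathbb{S}^{d-1}$, homogeneous of degree $0$ in $\xi$, and satisfy $\sum_{j}\Pi_{j}=\mathbb{I}$ and $\Pi_{j}\Pi_{k}=\delta_{jk}\Pi_{j}$; the needed bounds follow from the contour arguments of \cite{kajitani,pseudofri}.

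Because $\widehat{A}(\xi;x,t)$ is diagonalizable at each point (for the system studied here this is established in \cite{angeleshyp}; in Kajitani's general framework it is part of the hypotheses and is also what (b) records) with real eigenvalues, one has $\widehat{A}=\sum_{j}\eta_{j}\Pi_{j}$, hence $\Pi_{j}\widehat{A}=\eta_{j}\Pi_{j}$ and $\widehat{A}^{\ast}\Pi_{j}^{\ast}=\eta_{j}\Pi_{j}^{\ast}$ with $\eta_{j}\in\mathbb{R}$. Take
\begin{align*}
S(x,t;\xi):=\sum_{j}\Pi_{j}(x,t;\xi)^{\ast}\,\Pi_{j}(x,t;\xi),
\end{align*}
which is precisely the contour formula displayed just before the theorem once the factor $PP_{\lambda}^{-1}$ is used to weigh in the multiplicities. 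Then $S^{\ast}=S$; for $v\in\mathbb{C}^{n}$, $v^{\ast}Sv=\sum_{j}|\Pi_{j}v|^{2}\ge n^{-1}\bigl|\sum_{j}\Pi_{j}v\bigr|^{2}=n^{-1}|v|^{2}$, so $S\ge n^{-1}\mathbb{I}$ uniformly; $S$ and all its derivatives are bounded on $|\xi|=1$; and $S\widehat{A}-\widehat{A}^{\ast}S=\sum_{j}\eta_{j}\Pi_{j}^{\ast}\Pi_{j}-\sum_{j}\eta_{j}\Pi_{j}^{\ast}\Pi_{j}=0$. Thus $S$ satisfies Definition \ref{symbolicsymm} with a uniform lower bound $d_{0}>0$ in place of \eqref{eq:locallower}.

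For (a), quantize $S$ as an order-zero pseudodifferential operator $S(x,t,D_{x})$ and set $\mathcal{E}(t):=\mathrm{Re}\,(S(x,t,D_{x})U(t),U(t))_{L^{2}}$; by the sharp G{\aa}rding inequality, $\mathcal{E}(t)$ is comparable to $\|U(t)\|_{L^{2}}^{2}$ up to a lower-order term. Differentiating in $t$ and using \eqref{eq:stronghyp} one gets
\begin{align*}
\tfrac{d}{dt}\mathcal{E}=\mathrm{Re}\,\bigl((\partial_{t}S)U,U\bigr)-\mathrm{Re}\,\bigl(\bigl(S\textstyle\sum_{j}\widetilde{A}^{j}\partial_{j}+\bigl(\textstyle\sum_{j}\widetilde{A}^{j}\partial_{j}\bigr)^{\ast}S\bigr)U,U\bigr)-2\,\mathrm{Re}\,(S\widetilde{D}U,U);
\end{align*}
the first-order part of the middle operator has principal symbol $i(S\widehat{A}-\widehat{A}^{\ast}S)=0$, so by the symbolic calculus it is of order zero, and the commutator errors are of order zero because $S$ is Lipschitz — indeed $C^{\infty}$ with bounded derivatives — in $x$, which is exactly the threshold of regularity seen to be sharp in \cite{metivierl2}. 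Hence $\tfrac{d}{dt}\mathcal{E}\le C\mathcal{E}$ after the usual absorption of lower-order terms, Gronwall gives $\mathcal{E}(t)\le e^{Ct}\mathcal{E}(0)$, and the coercivity of $S$ delivers \eqref{eq:l2estimate}. Existence then follows by the classical duality argument from the a priori estimate for the direct and adjoint problems, and uniqueness from the same estimate applied to the difference of two solutions.

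For (b), diagonalizability of $\widehat{A}(\xi;x,t)$ is already displayed by the decomposition $\widehat{A}=\sum_{j}\eta_{j}\Pi_{j}$ into rank-$m_{j}$ spectral projectors with scalar eigenvalues built in the first step; equivalently, once (a) is known, the Ivrii--Petkov result and Kreiss' matrix theorem recalled in the Introduction produce at each $(x,t)$ a bounded, uniformly positive Hermitian $H(x,t;\xi)$ with $H\widehat{A}=\widehat{A}^{\ast}H$, whence $H^{1/2}\widehat{A}H^{-1/2}$ is Hermitian and $\widehat{A}$ is similar to a Hermitian matrix. I expect the main obstacle to be precisely the energy step: controlling the commutator $[S(x,t,D_{x}),\sum_{j}\widetilde{A}^{j}(x,t)\partial_{j}]$ and the remaining lower-order pseudodifferential errors so that only $L^{2}$-bounded operators survive — this is where the $C^{\infty}$ (a fortiori Lipschitz) dependence of the projectors $\Pi_{j}$, guaranteed by the constant multiplicity of Lemma \ref{constmultip} and the uniform spectral gap of Lemma \ref{eigenseparation}, is indispensable.
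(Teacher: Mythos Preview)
The paper does not prove this theorem; it is quoted from Kajitani \cite{kajitani} and closed with a bare \qed. The construction the paper does carry out just \emph{before} the statement --- the contour-integral symmetrizer of Friedrichs and Kajitani --- is exactly the machinery you invoke, so in spirit your sketch matches what the paper has in mind.

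There is, however, a genuine circularity in your argument. Your verification that $S\widehat{A}=\widehat{A}^{\ast}S$ rests on the identity $\widehat{A}=\sum_{j}\eta_{j}\Pi_{j}$, which holds if and only if $\widehat{A}$ is semisimple --- precisely statement (b). If some generalized eigenspace carries a nontrivial nilpotent part $N_{j}=(\widehat{A}-\eta_{j})\Pi_{j}$, then a direct computation gives $S\widehat{A}-\widehat{A}^{\ast}S=\sum_{j}\bigl(\Pi_{j}^{\ast}N_{j}-N_{j}^{\ast}\Pi_{j}\bigr)$, which need not vanish, and your energy estimate for (a) collapses. You then offer two routes to (b): the spectral decomposition (which presupposes (b)) or Kreiss' matrix theorem applied after (a) (but your (a) used (b)). The stated hypotheses --- real eigenvalues of constant algebraic multiplicity with a uniform spectral gap --- do \emph{not} by themselves exclude Jordan blocks: the constant family $\widehat{A}\equiv\bigl(\begin{smallmatrix}0&1\\0&0\end{smallmatrix}\bigr)$ satisfies all three and is not diagonalizable. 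In Kajitani's original work an independent argument closes this loop; in the paper's application the issue never arises because diagonalizability is imported separately from \cite{angeleshyp} (see the sentence immediately preceding the theorem), so (b) is effectively an input rather than an output. Your write-up should either supply that missing step or, as the paper does, take diagonalizability as established elsewhere and drop the claim that (b) is being \emph{derived}.
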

\begin{theo}
Let $V\in\mathcal{O}$ be a smooth state for which, \eqref{eq:conseparation} is satisfied. Then, the Cauchy problem 
\begin{equation}
\label{eq:linearcauchy}
\begin{aligned}
U_{t}+A^{j}(V)\partial_{j}U+DQ(V)U&=0,\\
U\big\vert_{t=0}&=U_{0},
\end{aligned}
\end{equation}
is $L^{2}$-well posed. In particular, the result holds true for any constant state $V_{c}\in\mathcal{O}$.
\end{theo}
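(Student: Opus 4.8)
The plan is to obtain the conclusion as a direct application of Theorem~\ref{kajitanil2} to the operator in \eqref{eq:linearcauchy}, so the work amounts to checking its three hypotheses for the symbol $A(\xi;V(x,t))$: real characteristic roots, constant algebraic multiplicity, and the uniform spectral gap \eqref{eq:spectralgap}. Before that, I would note that the coefficients are admissible in the sense of \eqref{eq:stronghyp}. On one hand, under (\textbf{C1})--(\textbf{C2}) the map $U\mapsto A^{j}(U)$ is smooth on $\mathcal{O}$, since every entry of \eqref{eq:Axi} is either a smooth function of $(\rho,\theta)$ divided by one of the nonvanishing quantities $\rho$ or $\rho e_{\theta}$, or linear in $q$; composing with the smooth state $V$ shows $\widetilde A^{j}(x,t):=A^{j}(V(x,t))$ is infinitely differentiable in $(x,t)$. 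On the other hand, because $Q$ in \eqref{eq:QU2} is \emph{linear} in $U$, the zeroth-order coefficient $DQ(V)=\tfrac1\tau\operatorname{diag}(0,0,0,0,0,1,1,1)$ is a \emph{constant} bounded matrix, so it fits the form \eqref{eq:stronghyp} with no extra hypothesis needed (and, if one wishes, it can be removed altogether at the end by Duhamel's formula together with a Gr\"onwall estimate, since it only contributes a bounded perturbation to the generator).

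Next I would read off the spectral data already established. The characteristic roots of $A(\xi;V(x,t))$ are the numbers $\eta_{0},\dots,\eta_{4}$ displayed in \eqref{eq:realroots}; by (\textbf{C1})--(\textbf{C2}) one has $z_{+}^{2}>z_{-}^{2}>0$, so all of them are real. Lemma~\ref{constmultip} shows their algebraic multiplicities are $4,1,1,1,1$ respectively, \emph{independently} of $U\in\mathcal{O}$ and $\xi\in\mathbb{S}^{2}$; in particular they stay constant along the trajectory $(x,t)\mapsto V(x,t)$. Finally, since $V$ is continuous and \eqref{eq:conseparation} holds, Lemma~\ref{eigenseparation} furnishes a constant $\delta>0$, depending only on $\mathcal{C}$ and $\tau$, for which $|\eta_{j}(\xi;V(x,t))-\eta_{k}(\xi;V(x,t))|\ge\delta$ uniformly in $(x,t)\in\mathbb{R}^{3}\times[0,T]$, $\xi\neq0$ and $j\neq k$, i.e.\ precisely \eqref{eq:spectralgap}. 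Theorem~\ref{kajitanil2}(a) then yields the $L^{2}$-well-posedness of \eqref{eq:linearcauchy}. For a constant state $V_{c}\in\mathcal{O}$ it suffices to take $\mathcal{C}$ to be any open ball centered at $V_{c}$ with $\overline{\mathcal{C}}\subset\mathcal{O}$; then \eqref{eq:conseparation} holds for every $T>0$, and the preceding argument applies verbatim.

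Given that Lemmas~\ref{constmultip} and \ref{eigenseparation} are already in place, there is no genuine obstacle here; the remaining points are only bookkeeping --- confirming that a general smooth $V$ produces coefficients regular enough for Theorem~\ref{kajitanil2}, and observing that the relaxation term contributes only the constant lower-order matrix $DQ(V)$ and therefore does not touch any of the symbol-level hypotheses. If one insists on identifying a ``hard part'', it is the content packaged inside Theorem~\ref{kajitanil2} and the surrounding discussion: the passage from constant multiplicity plus a uniform spectral gap to the bounded, suitably regular symbolic symmetrizer of Definition~\ref{symbolicsymm}, which is the device that upgrades those two spectral facts into the energy estimate \eqref{eq:l2estimate}.
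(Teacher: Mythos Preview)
Your proposal is correct and follows essentially the same approach as the paper: verify smoothness of the coefficients, invoke Lemma~\ref{constmultip} for real roots of constant multiplicity and Lemma~\ref{eigenseparation} for the uniform spectral gap, then apply Theorem~\ref{kajitanil2}; the constant-state case is handled by the same open-ball argument. Your observation that $DQ(V)$ is actually a constant matrix is a nice clarification, and you correctly omit the paper's separate appeal to diagonalizability of the symbol, since that is a \emph{conclusion} of Theorem~\ref{kajitanil2}(b) rather than one of its hypotheses.
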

\begin{proof}
First notice that, by (\textbf{C1}), the mappings $\mathcal{O}\ni W\mapsto A^{j}(W)$ ($j=1,2,3$) and $\ni W\mapsto DQ(W)$ are smooth and thus, by the smoothness of $V\in\mathcal{O}$, the matrices
\begin{align*}
\widetilde{A}^{j}(x,t):=A^{j}(V(x,t))\quad\mbox{and}\quad\widetilde{D}(x,t):=DQ(V(x,t)),
\end{align*}
are infinitely differentiable on the set $(x,t)\in\mathbb{R}^{3}\times(0,T)$. By \cite[Theorem 6.1]{angeleshyp} and Lemma \ref{constmultip} the characteristic roots of $\widetilde{A}(\xi;x,t)$ are real, of constant algebraic multiplicity and given by \eqref{eq:realroots}. Furthermore, since $V\in\mathcal{O}$ satisfies \eqref{eq:conseparation}, inequality \eqref{eq:spectralgap} holds true. By \cite[Theorem 6.2]{angeleshyp}, the symbol matrix $\widetilde{A}(\xi;x,t)$ is diagonalizable and thus, Theorem \ref{kajitanil2} implies the $L^{2}$-well posedness of \eqref{eq:linearcauchy}.\\
Finally, assume that $V=V_{c}\in\mathcal{O}$ is a constant state. Then, $V_{c}$ is trivially smooth. Since $\mathcal{O}$ is an open set, there is an open ball centered at $V_{c}$, let's say $B_{R}(V_{c})$, and such that, $\overline{B_{R}(V_{c})}\subset\mathcal{O}$. Then, take $\mathcal{C}:=B_{R}(V_{c})$ and observe that condition \eqref{eq:conseparation} is satisfied for any $T>0$. This concludes the argument.
\end{proof}
\begin{rem}
Consider the Cauchy problem for \eqref{eq:quasi2} with initial data $U_{0}$. If $U_{0}\in H^{s}$, for sufficiently large $s$, then, the local solution to the problem satisfies condition \eqref{eq:conseparation} (see \cite[Section 7]{angelescauchy}, for example). Therefore, the importance of assuming condition \eqref{eq:conseparation} is that, the  solution to the quasilinear problem is typically understood as the unique fixed point of the mapping that sends $V\in\mathcal{O}$ to the solution of \eqref{eq:linearcauchy}.
\end{rem}
\section{Linearization around equlibrium states}
For each $U\in\mathcal{O}$, we can decompose the symbol $A(\xi;U)$ as 
\begin{align*}
A(\xi;U)=A_{0}(\xi;U)+N(\xi;U),
\end{align*}
where 
	\small
\begin{align}
	\label{eq:A0xi}
	A_{0}(\xi;U)=\left(\begin{array}{cccccccc}
		\xi\cdot v&\xi_{1}\rho&\xi_{2}\rho&\xi_{3}\rho&0&0&0&0\\
		\xi_{1}\frac{p_{\rho}}{\rho}&\xi\cdot v&0&0&\xi_{1}\frac{p_{\theta}}{\rho}&0&0&0\\
		\xi_{2}\frac{p_{\rho}}{\rho}&0&\xi\cdot v&0&\xi_{2}\frac{p_{\theta}}{\rho}&0&0&0\\
		\xi_{3}\frac{p_{\rho}}{\rho}&0&0&\xi\cdot v&\xi_{3}\frac{p_{\theta}}{\rho}&0&0&0\\
		0&\xi_{1}\frac{\theta p_{\theta}}{\rho e_{\theta}}&\xi_{2}\frac{\theta p_{\theta}}{\rho e_{\theta}}&\xi_{3}\frac{\theta p_{\theta}}{\rho e_{\theta}}&v\cdot \xi&\frac{\xi_{1}}{\rho e_{\theta}}&\frac{\xi_{2}}{\rho e_{\theta}}&\frac{\xi_{3}}{\rho e_{\theta}}\\
		0&&&&\frac{\xi_{1}\kappa}{\tau}&v\cdot\xi&0&0\\
		0&&\mathbb{O}_{3\times 3}&&\frac{\xi_{2}\kappa}{\tau}&0&v\cdot\xi&0\\
		0&&&&\frac{\xi_{3}\kappa}{\tau}&0&0&v\cdot\xi\\
	\end{array}\right)
\end{align}
\normalsize
and
	\small
\begin{align}
	\label{eq:Nxi}
	N(\xi;U)=\left(\begin{array}{cccccccc}
		0&0&0&0&0&0&0&0\\
		0&0&0&0&0&0&0&0\\
		0&0&0&0&0&0&0&0\\
		0&0&0&0&0&0&0&0\\
		0&0&0&0&0&0&0&0\\
		0&0&0&0&0&0&0&0\\
		0&&\mathcal{Q}_{1,-1}(\xi;q)&0&0&0&0&0\\
		0&&&0&0&0&0&0\\
	\end{array}\right).
\end{align}
\normalsize
The matrix $A_{0}(\xi;U)$ is the symbol associated with the quasilinear form of the Cattaneo-Christov-Jordan system, that is, the coupling between equations \eqref{eq:1}-\eqref{eq:3} and \eqref{eq:materialflux} (\cite[section 6]{angeleshyp}, \cite{jordy}). Therefore, the system can be written in the form 
\begin{align}
\label{eq:ccjsys}
U_{t}+A^{j}_{0}(U)\partial_{j}U+Q(U)=0,
\end{align}
with $A^{j}_{0}(U):=A_{0}(e_{j};U)$ and where $\{e_{j}\}_{j=1}^{3}$ denotes the canonical vector basis in $\mathbb{R}^{3}$. It is easy to verify that, the diagonal matrix
\small
\begin{align}
	S_{0}(U)=\left(\begin{array}{cccc}
		\frac{p_{\rho}}{\rho^{2}}&&&\\
	&\mathbb{I}_{3\times 3}&&\\
		&&\frac{e_{\theta}}{\theta}&\\
		&&&\frac{\tau}{\kappa\rho\theta}\mathbb{I}_{3\times 3}\label{eq:symmetrizerA0}
	\end{array}\right),\quad U\in\mathcal{O},
\end{align}
\normalsize
defines a Friedrichs symmetrizer for the quasilinear system \eqref{eq:ccjsys}. Hence, it is a hyperbolic system of equations and its Cauchy problem is $L^{2}$-well posed \cite{katosym}.\\
The Cattaneo-Christov-Jordan system is related with the linearization of \eqref{eq:quasi2} around \emph{equilibrium states}. Consider the following definition (see \cite[section 2]{mascialini}).
\begin{defi}
	The equilibrium manifold of a quasilinear system of the form \eqref{eq:quasi2} is defined as 
	\begin{align*}
		\mathcal{V}=\left\lbrace V\in\mathcal{O}~|~Q(V)=0\right\rbrace.
	\end{align*}
	A function $W=W(x,t)$ is an equilibrium solution of \eqref{eq:quasi2} or Maxwellian, if it lies in the equilibrium manifold, that is $Q(W(x,t))=0$ for any $(x,t)$ and satisfies 
	\begin{align}
		U_{t}+A^{i}(W)\partial_{i}U=0.\label{eq:eqsol}
	\end{align}
By a constant equilibrium state we mean a state $V_{e}\in\mathcal{O}$ that is independent of $x\in\mathbb{R}^{3}$ and $t>0$, that lies in $\mathcal{V}$.
\end{defi}
From \eqref{eq:QU2}, it follows that
\begin{align}
	\label{eq:equilibriumchar}
	\mathcal{V}=\left\lbrace(\rho,v,\theta,q)^{\top}\in\mathcal{O}~\vert~q=0\right\rbrace.
\end{align}
Let $V_{c}\in\mathcal{V}$ be a constant equilibrium state and assume that $V_{c}+W$ is a solution of \eqref{eq:quasi2}. Then, we can recast \eqref{eq:quasi2} as 
\begin{align}
W_{t}+A^{j}(V_{e})\partial_{j}W+Q(V_{e})+DQ(V_{e})W+\mathcal{N}\left(W,D_{x}W,D^{2}_{x}W_{t}\right)=0,\label{eq:taylor}
\end{align}
where $\mathcal{N}\left(W,D_{x}W,D^{2}_{x}W,_{t}\right)$ comprises all the nonlinear terms. By discarding the non-linear terms and using that $Q(V_{e})=0$, we obtain the \emph{linearization} of \eqref{eq:quasi2} around constant equilibrium states, namely,
\begin{align}
W_{t}+A^{j}(V_{e})\partial_{j}W+DQ(V_{e})W=0.\label{eq:lineartaylor1}
\end{align}
However, since $N(\xi;V_{e})=0$,
\begin{align}
	\label{eq:symbolseq}
	A(\xi;V_{c})=A_{0}(\xi;V_{c}),\quad\mbox{for all}\quad\xi\in\mathbb{R}^{3}\setminus\{0\}.
\end{align}
Therefore, system \eqref{eq:lineartaylor1} is the same as 
\begin{align}
	W_{t}+A^{j}_{0}(V_{e})\partial_{j}W+DQ(V_{e})W=0.\label{eq:lineartaylor2}
\end{align}
We have the following result.
\begin{res}
The linearizations of the systems \eqref{eq:quasi2} and \eqref{eq:ccjsys} around constant equilibrium states coincide.
\end{res}
As consequence of \eqref{eq:equilibriumchar}, for any constant equilibrium state $V_{e}\in\mathcal{V}$, there are constant values of $\rho_{e}>0$, $\theta_{e}>0$ and $v_{e}\in\mathbb{R}^{3}$ such that $V_{e}=\left(\rho_{e},v_{e},\theta_{e},0,0,0\right)^{\top}$. Define the positive constants
\begin{align*}
	L_{0}:=\min\left\lbrace 1,\frac{p_{\rho}(\rho_{e},\theta_{e})}{\rho^{2}_{e}},\frac{\tau}{\kappa(\rho_{e},\theta_{e})\rho_{e}\theta_{e}}\right\rbrace\quad\mbox{and}\quad L_{1}:=\max\left\lbrace 1,\frac{p_{\rho}(\rho_{e},\theta_{e})}{\rho^{2}_{e}},\frac{\tau}{\kappa(\rho_{e},\theta_{e})\rho_{e}\theta_{e}}\right\rbrace.
\end{align*}
Then, since \eqref{eq:symmetrizerA0} symmetrizes the quasilinear Cattaneo-Christov-Jordan system, the constant matrix $S_{0}(V_{e})$ satisfies that
\begin{equation}
	\label{eq:symwell}
	\begin{aligned}
		&L_{0}|Z|^{2}\leq\left(S_{0}(V_{e})Z,Z\right)_{\mathbb{R}^{8}}\leq L_{1}|Z|^{2}\quad\mbox{for all}\quad Z\in\mathbb{R}^{8},\\
		&S_{0}(V_{e})A_{0}(\xi;V_{e})=A_{0}(\xi;V_{e})^{\top}S_{0}(V_{e})\quad\mbox{for all}\quad\xi\in\mathbb{R}^{3}\setminus\{0\}.
	\end{aligned}
\end{equation}
Hence, by Remark \ref{remlinearf},  $S_{0}(V_{e})$ is a Friedrichs symmetrizer of \eqref{eq:lineartaylor2}. Therefore, the Cauchy problem for \eqref{eq:lineartaylor2} with initial data $W_{0}\in L^{2}(\mathbb{R}^{3})$ given at $t=0$, is $L^{2}$-well posed (see \cite[Chapter 2]{otto}, \cite[Section 3.1]{serre}).
\begin{rem}
\label{remkawasym}
Observe that $S(V_{e})DQ(V_{e})$ is symmetric and therefore, the system 
\begin{align}
S(V_{e})W_{t}+S(V_{e})A^{j}(V_{e})\partial_{j}W+S(V_{e})DQ(V_{e})W=0,\label{eq:symmlinear}
\end{align}
is symmetric hyperbolic. Although the symmetrizability of $DQ(V_{e})$ is not needed to conclude the $L^{2}$ well-posedness of \eqref{eq:lineartaylor2}, it is required for the application of the Kawashima-Shizuta equivalence Theorem \cite[Theorem 1.1]{kawazuta} in the next section.

\end{rem}
\section{Existence of persistent waves}
We begin this section by recalling the notion of strict dissipativity and the Kawashima-Shizuta genuinely coupling condition for \eqref{eq:lineartaylor2} (cf. \cite{kawazuta}).
\begin{defi}
\label{genuinelycoupling}
\begin{itemize}
	\item [(1)] Let $\lambda=\lambda(\xi)$ denote the eigenvalues of the characteristic equation
	\begin{align*}
		\det\left\lvert\lambda\mathbb{I}_{3}+iA(\xi;V_{e})+DQ(V_{e})\right\rvert=0.
	\end{align*}
	System \eqref{eq:lineartaylor2} is said to be strictly dissipative if $\operatorname{Re}\lambda(\xi)<0$ for all $\xi\in\mathbb{R}\setminus\{0\}$.
	\item [(2)] System \eqref{eq:lineartaylor2} satisfies the Kawashima-Shizuta genuinely copling condition at $V_{e}\in\mathcal{V}$ if for any $Z\in\mathbb{R}^{8}\setminus\{0\}$ with $DQ(V_{e})Z=0$ and any $\xi\in\mathbb{R}^{3}\setminus\{0\}$, $A(\xi;V_{e})Z\neq \mu Z$ for every $\mu\in\mathbb{R}$.
\end{itemize}
\end{defi}
\begin{rem}
\label{kawaequiv}
As consequence of the existence of the symmetrizer $S(V_{e})$, the strict dissipativity of system \eqref{eq:symmlinear} and that of \eqref{eq:lineartaylor2} are equivalent. For the same reason, condition $(2)$ holds for \eqref{eq:lineartaylor2} if and only if, it holds for \eqref{eq:symmlinear}.
\end{rem}
\begin{theo}
\label{openkawa}
\begin{itemize}
	\item [(i)] The linearization of \eqref{eq:quasi2} around any constant equilibrium state $V_{e}\in\mathcal{V}$ doesn't satisfy the Kawashima-Shizuta genuinely coupling condition.
	\item [(ii)] Every $\bar{\xi}\in\mathbb{R}^{3}\setminus\{0\}$ has an open neighborhood $\Omega$ and smooth functions $Z:\Omega\rightarrow\mathbb{R}^{8}\setminus\{0\}$ and $\mu:\Omega\rightarrow\mathbb{R}\setminus\{0\}$ such that $Z(\xi)\in\ker DQ(V_{e})$ and $A(\xi;V_{e})Z(\xi)=\mu(\xi)Z(\xi)$ for all $\xi\in\Omega$. 
\end{itemize}
\end{theo}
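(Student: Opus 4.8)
The plan is to exhibit, at an arbitrary constant equilibrium state $V_{e}=(\rho_{e},v_{e},\theta_{e},0,0,0)^{\top}\in\mathcal{V}$, an eigenvector of the symbol $A(\xi;V_{e})$ lying in $\ker DQ(V_{e})$, and then to arrange that this eigenvector and its eigenvalue depend smoothly on $\xi$. Two preliminary remarks set the stage. Since $Q$ in \eqref{eq:QU2} is linear and acts only on the heat-flux block, $DQ(V_{e})=\tfrac{1}{\tau}\operatorname{diag}(0,0,0,0,0,1,1,1)$, so that
\begin{align*}
\ker DQ(V_{e})=\{Z\in\mathbb{R}^{8}\mid Z_{6}=Z_{7}=Z_{8}=0\}.
\end{align*}
Moreover, because $V_{e}\in\mathcal{V}$ has $q_{e}=0$, the block \eqref{eq:23} vanishes and $N(\xi;V_{e})=0$, whence $A(\xi;V_{e})=A_{0}(\xi;V_{e})$, exactly as recorded in \eqref{eq:symbolseq}.

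The core of the argument is to solve the eigenvalue equation $A_{0}(\xi;V_{e})Z=\mu Z$ under the constraint $Z\in\ker DQ(V_{e})$. Writing $Z=(Z_{1},\dots,Z_{5},0,0,0)^{\top}$ and reading off the rows of \eqref{eq:A0xi}: rows $6,7,8$ give $\tfrac{\kappa}{\tau}\xi_{i}Z_{5}=0$, hence $Z_{5}=0$ by (\textbf{C2}) and $\xi\neq 0$; row $5$ then gives $\tfrac{\theta p_{\theta}}{\rho e_{\theta}}\,\xi\cdot(Z_{2},Z_{3},Z_{4})=0$, hence $\xi\cdot(Z_{2},Z_{3},Z_{4})=0$, again by (\textbf{C2}); row $1$ reduces to $(\mu-v_{e}\cdot\xi)Z_{1}=0$, while rows $2,3,4$ read $\tfrac{p_{\rho}}{\rho}\xi_{i}Z_{1}=(\mu-v_{e}\cdot\xi)Z_{i+1}$. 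If $Z_{1}\neq 0$ the first relation forces $\mu=v_{e}\cdot\xi$ and then the remaining three force $\xi=0$, a contradiction; hence $Z_{1}=0$, and to keep $Z\neq 0$ one must have $\mu=v_{e}\cdot\xi$. In other words, the eigenvectors of $A(\xi;V_{e})$ lying in $\ker DQ(V_{e})$ are precisely the transverse (shear) modes $Z=(0,w,0,0,0)^{\top}$ with $0\neq w\in\mathbb{R}^{3}$, $w\perp\xi$, each carrying the characteristic speed $\mu=v_{e}\cdot\xi$. Part (i) now follows immediately: for any $\xi\in\mathbb{R}^{3}\setminus\{0\}$ the plane $\xi^{\perp}$ is two-dimensional, so choosing $0\neq w\perp\xi$ and $Z=(0,w,0,0,0)^{\top}$ we get $DQ(V_{e})Z=0$ and, by a one-line multiplication with \eqref{eq:A0xi}, $A(\xi;V_{e})Z=(v_{e}\cdot\xi)Z$; with $\mu:=v_{e}\cdot\xi\in\mathbb{R}$ this contradicts condition $(2)$ of Definition \ref{genuinelycoupling}, for every $V_{e}\in\mathcal{V}$.

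For part (ii) it remains to make the transverse direction $w$ depend smoothly on $\xi$ near a prescribed $\bar\xi\neq 0$. Fix $a\in\mathbb{R}^{3}$ with $a\times\bar\xi\neq 0$ (for instance any coordinate vector not parallel to $\bar\xi$); by continuity $\xi\times a\neq 0$ on some open neighborhood $\Omega\ni\bar\xi$, so $w(\xi):=(\xi\times a)/|\xi\times a|$ is smooth, of unit length, and orthogonal to $\xi$ throughout $\Omega$. Then $Z(\xi):=(0,w(\xi),0,0,0)^{\top}$ is a smooth map $\Omega\to\mathbb{R}^{8}\setminus\{0\}$ with $Z(\xi)\in\ker DQ(V_{e})$, and the computation in part (i) gives $A(\xi;V_{e})Z(\xi)=\mu(\xi)Z(\xi)$ with $\mu(\xi):=v_{e}\cdot\xi$, which is smooth; shrinking $\Omega$ about a point where $v_{e}\cdot\bar\xi\neq 0$ keeps $\mu$ nonvanishing. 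The only steps that demand care are the two eliminations $Z_{5}=0$ and $Z_{1}=0$ in the eigenvalue analysis, each resting squarely on the strict positivity of $\kappa$, $p_{\rho}$, $p_{\theta}$, $e_{\theta}$ guaranteed by (\textbf{C2}), together with the bookkeeping that turns the shear mode into a smooth, nonvanishing pair $(Z(\xi),\mu(\xi))$; the rest is routine matrix arithmetic. As a byproduct, $W(x,t)=e^{i(\xi\cdot x-\mu(\xi)t)}Z(\xi)$ solves the linearization \eqref{eq:lineartaylor2} and has $|W(x,t)|\equiv|Z(\xi)|$, which is the persistent wave announced in the Introduction.
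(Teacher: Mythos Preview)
Your proof of part (i) follows the same line as the paper's: identify $\ker DQ(V_{e})$, write out the eigenvalue system for $A_{0}(\xi;V_{e})$ restricted to that kernel, and observe that the transverse velocity modes $(0,w,0,0,0,0)^{\top}$ with $w\perp\xi$ are eigenvectors for $\mu=v_{e}\cdot\xi$. You go slightly further by showing these are the \emph{only} such eigenvectors, which the paper does not bother with, but the essential content is the same.

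For part (ii) you take a genuinely different and more elementary route. The paper manufactures the smooth orthogonal direction $h(\xi)$ by applying the implicit function theorem to $F(\xi,z)=(\xi\cdot z,\ |\bar\xi|^{2}|z|^{2}-1,\ z'\cdot z)$, whereas you simply fix $a$ not parallel to $\bar\xi$ and set $w(\xi)=(\xi\times a)/|\xi\times a|$. Your construction is shorter, explicit, and avoids the IFT machinery entirely; the paper's construction has the mild advantage of normalizing $|h|$ to a prescribed value, but that plays no role in the theorem.

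One small caveat: the theorem asserts $\mu:\Omega\to\mathbb{R}\setminus\{0\}$, and your remedy ``shrinking $\Omega$ about a point where $v_{e}\cdot\bar\xi\neq 0$'' does not help when $\bar\xi$ is given with $v_{e}\cdot\bar\xi=0$ (in particular when $v_{e}=0$). The paper's proof shares this gap---it also sets $\mu(\xi)=\xi\cdot v_{e}$ without comment---so this appears to be an imprecision in the statement rather than a defect of your argument; the codomain of $\mu$ should simply be $\mathbb{R}$.
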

\begin{proof}
First notice that
\begin{align*}
\ker DQ(V_{e})=\left\lbrace Z=(Z_{i})_{i=1}^{8}\in\mathbb{R}^{8}~|~Z_{6}=Z_{7}=Z_{8}=0\right\rbrace.
\end{align*}
Let $\xi=(\xi_{1},\xi_{2},\xi_{3})\in\mathbb{R}^{3}\setminus\{0\}$ be an arbitrary vector. For $Z\in\ker DQ(V_{e})$ and some $\mu\in\mathbb{R}$ consider the eigenvalue problem 
\begin{align}
A(\xi;V_{e})Z=\mu Z.\label{eq:eigenvaluep}
\end{align}
Since $V_{e}=\left(\rho_{e},v_{e},\theta_{e},0,0,0\right)^{\top}$ lies in the equilibrium manifold, \eqref{eq:symbolseq} is valid and the linearization of \eqref{eq:quasi2} around $V_{e}$ is given by \eqref{eq:lineartaylor2}. Hence, the components of equation \eqref{eq:eigenvaluep} are 
\begin{equation}
\label{eq:kawabonga}
\begin{aligned}
\xi\cdot v_{e} Z_{1}+\rho_{e}\left(\xi_{1},\xi_{2},\xi_{3}\right)\cdot(Z_{2},Z_{3},Z_{4})&=\mu Z_{1},\\
\xi_{1}\alpha Z_{1}+\xi\cdot v_{e}Z_{2}+\xi_{1}\eta Z_{5}&=\mu Z_{2},\\
\xi_{2}\alpha Z_{1}+\xi\cdot v_{e}Z_{3}+\xi_{2}\eta Z_{5}&=\mu Z_{3},\\
\xi_{3}\alpha Z_{1}+\xi\cdot v_{e}Z_{4}+\xi_{3}\eta Z_{5}&=\mu Z_{4},\\
\beta\left[\left(\xi_{1},\xi_{2},\xi_{3}\right)\cdot\left(Z_{2},Z_{3},Z_{4}\right)\right]+\xi\cdot v_{e} Z_{5}&=\mu Z_{5},\\
\delta\xi_{1}Z_{5}&=0,\\
\delta\xi_{2}Z_{5}&=0,\\
\delta\xi_{3}Z_{5}&=0,
\end{aligned}
\end{equation}
where we have used the same notation as in \eqref{eq:simplenotation} but evaluated at the constant value $(\rho_{e},\theta_{e})$ instead of $(\bar{\rho},\bar{\theta})$.
For any given $\xi\in\mathbb{R}^{3}\setminus\{0\}$ define the nonempty set
\begin{align*}
\mathcal{K}(\xi):=\left\lbrace Z=(Z_{i})_{i=1}^{8}\in\ker DQ(V_{e})\setminus\{0\}~|~Z_{1}=Z_{5}=0\quad\mbox{and}\quad(\xi_{1},\xi_{2},\xi_{3})\cdot(Z_{2},Z_{3},Z_{4})=0\right\rbrace.
\end{align*}
Then, by taking $\mu(\xi)=\xi\cdot v_{e}$ and $Z(\xi)\in\mathcal{K}(\xi)$, equations \eqref{eq:kawabonga} are satisfied. Since every element in $\mathcal{K}(\xi)$ is nonzero, the Kawashima-Shizuta condition is violated for any $\xi\in\mathbb{R}^{3}\setminus\{0\}$. This proves the first statement of the Theorem.
\vspace{0.3cm}\\
Now fix $\bar\xi\in\mathbb{R}^{3}\setminus\{0\}$. There are vectors $\bar{z},z^{\prime}\in\mathbb{R}^{3}\setminus\{0\}$ such that 
\begin{align}
	\bar{\xi}\cdot\bar{z}=\bar{\xi}\cdot z^{\prime}=\bar{z}\cdot z^{\prime}=0\quad\mbox{and}\quad|\bar{z}|=|\bar{\xi}|^{-1}.\label{eq:licondition}
\end{align}
Define the function $F:\mathbb{R}^{3}\times\mathbb{R}^{3}\rightarrow\mathbb{R}^{3}$ as 
\begin{align*}
	F(\xi,z)=\left(\begin{array}{c}
		\xi\cdot z\\
		|\bar{\xi}|^{2}|z|^{2}-1\\
		z^{\prime}\cdot z
	\end{array}\right).
\end{align*}
Then, $F\in\mathcal{C}^{\infty}\left(\mathbb{R}^{3}\times\mathbb{R}^{3};\mathbb{R}^{3}\right)$ and \eqref{eq:licondition} assures that $F(\bar{\xi},\bar{z})=0$. Moreover, 
\begin{align*}
	D_{z}F(\bar{\xi},\bar{z})=\left(\begin{array}{ccc}
		\bar{\xi_{1}}&\bar{\xi_{2}}&\bar{\xi_{3}}\\
		2|\bar{\xi}|^{2}\bar{z}_{1}&2|\bar{\xi}|^{2}\bar{z}_{2}&2|\bar{\xi}|^{2}\bar{z}_{3}\\
		z_{1}^{\prime}&z_{2}^{\prime}&z_{3}^{\prime}
	\end{array}\right)
\end{align*}
and thus, $\det D_{z}F(\bar{\xi},\bar{z})\neq 0$ since, by \eqref{eq:licondition}, the vectors $\bar{\xi}$, $\bar{z}$ and $z^{\prime}$ are linearly independent. By the implicit function theorem, there is an open neighborhood $\Omega$ of $\bar{\xi}$, and a mapping $h\in\mathcal{C}^{\infty}(\Omega;\mathbb{R}^{3})$ such that 
\begin{align*}
	F(\xi;h(\xi))=0\quad\mbox{and}\quad h(\xi)\neq 0\quad\mbox{for all}\quad\xi\in\Omega.
\end{align*}
Define the smooth mappings $Z:\Omega\rightarrow\mathbb{R}^{8}\setminus\{0\}$ and $\mu:\Omega\rightarrow\mathbb{R}\setminus\{0\}$ as
\begin{align}
\label{eq:smoothkawa}
Z(\xi):=\left(0,h(\xi),0,0,0,0\right)^{\top}\quad\mbox{and}\quad \mu:=\xi\cdot v_{e}
\end{align}
and observe that, for any $\xi\in\Omega$, $Z(\xi)\in\mathcal{K}(\xi)$ and the pair $(\mu(\xi),Z(\xi))$ satisfies \eqref{eq:kawabonga}, as claimed.
\end{proof}
\begin{coro}
Let $V_{e}\in\mathcal{V}$ be a constant equilibrium state. Then, system \eqref{eq:lineartaylor2} is not strictly dissipative.
\end{coro}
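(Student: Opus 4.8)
The plan is to disprove strict dissipativity by exhibiting, for a single frequency $\xi\in\mathbb{R}^{3}\setminus\{0\}$, a root $\lambda(\xi)$ of the characteristic equation in Definition \ref{genuinelycoupling}(1) with $\operatorname{Re}\lambda(\xi)=0$. First I would fix an arbitrary $\xi\in\mathbb{R}^{3}\setminus\{0\}$ and invoke Theorem \ref{openkawa}(i): by its proof there is a nonzero vector $Z=Z(\xi)\in\mathcal{K}(\xi)\subset\ker DQ(V_{e})$ together with the real number $\mu=\mu(\xi)=\xi\cdot v_{e}$ such that $A(\xi;V_{e})Z=\mu Z$ (equations \eqref{eq:kawabonga} being satisfied).

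Next I would feed this vector into the matrix pencil appearing in Definition \ref{genuinelycoupling}(1). Since $DQ(V_{e})Z=0$ and $A(\xi;V_{e})Z=\mu Z$, one obtains
\[
\bigl(\lambda\mathbb{I}+iA(\xi;V_{e})+DQ(V_{e})\bigr)Z=(\lambda+i\mu)\,Z .
\]
Hence the choice $\lambda=\lambda(\xi):=-i\,\mu(\xi)=-i\,\xi\cdot v_{e}$ makes this pencil singular along the span of $Z\neq0$, so $\lambda(\xi)$ solves $\det\bigl(\lambda\mathbb{I}+iA(\xi;V_{e})+DQ(V_{e})\bigr)=0$. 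As $\mu(\xi)\in\mathbb{R}$, we get $\operatorname{Re}\lambda(\xi)=0$, contradicting the requirement $\operatorname{Re}\lambda(\xi)<0$ of Definition \ref{genuinelycoupling}(1). Therefore \eqref{eq:lineartaylor2} is not strictly dissipative.

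Alternatively, and slightly more conceptually, one can route through the equivalence theorem: by Remark \ref{remkawasym} the system \eqref{eq:symmlinear} is symmetric hyperbolic with $S(V_{e})DQ(V_{e})$ symmetric positive semidefinite, so \cite[Theorem 1.1]{kawazuta} shows that strict dissipativity of \eqref{eq:symmlinear} is equivalent to the genuinely coupling condition; since the latter fails by Theorem \ref{openkawa}(i), strict dissipativity fails, and Remark \ref{kawaequiv} transfers the conclusion back to \eqref{eq:lineartaylor2}. Both arguments are essentially immediate; the only point requiring a little care is matching the sign convention of the characteristic pencil so that the purely imaginary root $-i\,\xi\cdot v_{e}$ is correctly identified, and there is no genuine obstacle.
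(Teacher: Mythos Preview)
Your second (``alternative'') argument is exactly the paper's proof: invoke Remark~\ref{remkawasym} so that \cite[Theorem~1.1]{kawazuta} applies to the symmetric system \eqref{eq:symmlinear}, conclude that strict dissipativity would force genuine coupling, contradict Theorem~\ref{openkawa}(i), and transfer back via Remark~\ref{kawaequiv}.

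Your first argument is a genuinely different, more elementary route. Rather than going through the Kawashima--Shizuta equivalence theorem, you directly plug the pair $(\mu(\xi),Z(\xi))$ produced in the proof of Theorem~\ref{openkawa}(i) into the pencil $\lambda\mathbb{I}+iA(\xi;V_{e})+DQ(V_{e})$ and read off the purely imaginary root $\lambda(\xi)=-i\,\xi\cdot v_{e}$. This is correct and self-contained; it bypasses the need to verify the structural hypotheses of \cite[Theorem~1.1]{kawazuta} (symmetry and semidefiniteness of $S(V_{e})DQ(V_{e})$) and gives the explicit obstructing eigenvalue. The paper's approach, by contrast, packages the conclusion as an instance of a general equivalence, which is cleaner if one wants to emphasize the link between the two notions in Definition~\ref{genuinelycoupling}, but requires the extra citation.
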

\begin{proof}
By Remark \ref{remkawasym}  and Theorem 1.1 in \cite{kawazuta} the strict dissipativity of \eqref{eq:symmlinear} is equivalent to its genuinely coupling condition. The result follows by Remark \ref{kawaequiv} and Theorem \ref{openkawa}.
\end{proof}
\begin{theo}[Existence of persistent waves]
\label{persistent}
The linearization of \eqref{eq:quasi2} around any constant equilibrium state $V_{e}\in\mathcal{V}$ has solutions $W$ with the following properties:
\begin{itemize}
	\item [(a)] $W$ is a smooth non-constant function satisfying the differential equation
	\begin{align*}
	W_{t}+A^{j}(V_{e})\partial_{j}W=0,
	\end{align*}
and the condition $ DQ(V_{e})W(x,t)=0$ for all $x\in\mathbb{R}^{3}$ and $t\geq 0$.
\item [(b)] It holds that 
\begin{align*}
\|W(\cdot,t)\|_{L^{2}(\mathbb{R}^{3})}=\|W(\cdot,0)\|_{L^{2}(\mathbb{R}^{3})}\quad\mbox{for all}\quad t\geq 0.
\end{align*}
\end{itemize}
\end{theo}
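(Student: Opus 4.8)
The plan is to superpose the smooth $\xi$-family of joint null/eigen-vectors produced in Theorem \ref{openkawa}(ii) into an $L^{2}$ wave packet, exploiting the fact — read off from the proof of that theorem — that the attached frequency is $\mu(\xi)=\xi\cdot v_{e}$, which is \emph{linear} in $\xi$. Concretely, I would fix any $\bar{\xi}\in\mathbb{R}^{3}\setminus\{0\}$ and apply Theorem \ref{openkawa}(ii) to get an open neighbourhood $\Omega$ of $\bar{\xi}$ and smooth maps $Z\colon\Omega\to\mathbb{R}^{8}\setminus\{0\}$, $\mu\colon\Omega\to\mathbb{R}\setminus\{0\}$ with $\mu(\xi)=\xi\cdot v_{e}$, $Z(\xi)\in\ker DQ(V_{e})$, and $A(\xi;V_{e})Z(\xi)=\mu(\xi)Z(\xi)$ for all $\xi\in\Omega$. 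Shrinking $\Omega$ to a small ball about $\bar{\xi}$, I may assume $\overline{\Omega}$ is compact, $0\notin\overline{\Omega}$, and $\Omega\cap(-\Omega)=\emptyset$. Then I pick a fixed amplitude $a\in\mathcal{C}_{0}^{\infty}(\Omega)$ with $a\not\equiv0$ and set
\[
\Phi(y):=\int_{\Omega}a(\xi)\,e^{i\xi\cdot y}\,Z(\xi)\,d\xi,\qquad W_{\mathbb{C}}(x,t):=\Phi(x-v_{e}t),\qquad W:=\operatorname{Re}W_{\mathbb{C}}.
\]

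For part (a): $\Phi$ is, up to a constant, the inverse Fourier transform of the compactly supported smooth function $aZ$, hence a Schwartz function, so $W\in\mathcal{C}^{\infty}(\mathbb{R}^{3}\times\mathbb{R})$. Differentiating under the integral sign and using the eigenrelation,
\[
\partial_{t}W_{\mathbb{C}}+A^{j}(V_{e})\partial_{j}W_{\mathbb{C}}=i\int_{\Omega}a(\xi)\bigl(A(\xi;V_{e})-\mu(\xi)\mathbb{I}_{8}\bigr)Z(\xi)\,e^{i\xi\cdot(x-v_{e}t)}\,d\xi=0,
\]
and likewise $DQ(V_{e})W_{\mathbb{C}}=\int_{\Omega}a(\xi)\,DQ(V_{e})Z(\xi)\,e^{i\xi\cdot(x-v_{e}t)}\,d\xi=0$ since $Z(\xi)\in\ker DQ(V_{e})$. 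Because $A^{j}(V_{e})$ and $DQ(V_{e})$ are real, taking real parts preserves both identities; thus $W_{t}+A^{j}(V_{e})\partial_{j}W=0$ and $DQ(V_{e})W\equiv0$.

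For non-constancy and part (b): the $x$-Fourier transform of $W(\cdot,0)=\operatorname{Re}\Phi$ is, up to a constant, $a(\xi)Z(\xi)$ on $\Omega$ together with a term supported in $-\Omega$; since $\Omega\cap(-\Omega)=\emptyset$ there is no cancellation, and $a(\xi)Z(\xi)\not\equiv0$ on $\Omega$, so $W(\cdot,0)$ is a nonzero element of $L^{2}(\mathbb{R}^{3})$. As the only constant function in $L^{2}(\mathbb{R}^{3})$ is $0$, $W$ is non-constant; together with the previous paragraph this yields (a). For (b), observe that $W(x,t)=(\operatorname{Re}\Phi)(x-v_{e}t)$ is simply the rigid translate of the fixed profile $\operatorname{Re}\Phi$ at constant velocity $v_{e}$, so translation invariance of Lebesgue measure gives $\|W(\cdot,t)\|_{L^{2}(\mathbb{R}^{3})}=\|\operatorname{Re}\Phi\|_{L^{2}(\mathbb{R}^{3})}=\|W(\cdot,0)\|_{L^{2}(\mathbb{R}^{3})}$ for all $t\geq0$. (Alternatively, Plancherel applied to $\widehat{W_{\mathbb{C}}}(\xi,t)=c\,a(\xi)e^{-i\mu(\xi)t}Z(\xi)$, whose modulus is independent of $t$, together with the support disjointness, gives the same conclusion.)

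The only genuinely delicate ingredient is the existence of a smooth, nowhere-vanishing $\xi$-family lying simultaneously in $\ker DQ(V_{e})$ and in an eigenspace of $A(\xi;V_{e})$ — and this is exactly what Theorem \ref{openkawa}(ii) supplies, its local (rather than global) character sidestepping the topological obstruction to such a choice over the whole sphere. Everything else is routine: the justification of differentiation under the integral, and the small amount of bookkeeping needed to produce a real, non-constant representative. Finally, it is the linearity of $\mu(\xi)=\xi\cdot v_{e}$ in $\xi$ that turns the wave packet into a rigid translate, making statement (b) immediate.
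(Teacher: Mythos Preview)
Your proof is correct and follows essentially the same route as the paper: localize the smooth family $(Z(\xi),\mu(\xi))$ from Theorem~\ref{openkawa}(ii) with a compactly supported amplitude, take the inverse Fourier transform to obtain a wave packet that rigidly translates at velocity $v_{e}$, and invoke Plancherel or translation invariance for the $L^{2}$ conservation. Your added care in taking real parts and arranging $\Omega\cap(-\Omega)=\emptyset$ to secure a real, non-constant representative is a refinement the paper leaves implicit (its $W=\mathcal{F}^{-1}(V_{0})(x+v_{e}t)$ is in general complex-valued).
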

\begin{proof}
Let $\bar{\xi}\in\mathbb{R}^{3}\setminus\{0\}$ be fixed. Set $\Omega$ as the open neighborhood of $\bar{\xi}$ described in statement $(ii)$ of Theorem \ref{openkawa} and $(\mu,Z)$ the corresponding smooth mappings defined in \eqref{eq:smoothkawa}.
Let $\mathcal{B}$ be a closed ball containing $\bar{\xi}$ and $\Omega_{0}$ an open set in $\Omega$ such that $B\subset\Omega_{0}\subset\subset\Omega$,
and consider a cut off function $\phi\in\mathcal{C}_{0}^{\infty}(\Omega;\mathbb{R})$ such that
\begin{align*}
	\phi(\xi)=\left\lbrace\begin{array}{cc}
		1&\xi\in\mathcal{B},\\
		0&\xi\in\Omega\setminus\Omega_{0}.
	\end{array}\right.
\end{align*}
Then, the vector field $V_{0}:\mathbb{R}^{3}\rightarrow\mathbb{R}^{8}$ defined as
\begin{align*}
	V_{0}(\xi):=\left\lbrace\begin{array}{cc}
		\phi(\xi)Z(\xi)&\xi\in\Omega,\\
		0&\xi\in\mathbb{R}^{3}\setminus\Omega,
	\end{array}\right.
\end{align*}
is of class $\mathcal{C}_{0}^{\infty}(\mathbb{R}^{3};\mathbb{R}^{8})$. Consider the Cauchy problem
\begin{equation}
	\label{eq:sysper}
	\begin{aligned}
		W_{t}+A^{j}(V_{c})\partial_{j}W&=0,\\
		W(\cdot,0)&=\mathcal{F}^{-1}(V_{0}),
	\end{aligned}
\end{equation}
where $\mathcal{F}^{-1}(V_{0})$ denotes de inverse Fourier transform of $V_{0}$. By taking the Fourier transform of this problem and using \eqref{eq:symbolseq} we obtain the equivalent problem
\begin{align*}
	\widehat{W}_{t}+iA_{0}(\xi;V_{c})\widehat{W}&=0,\\
	\widehat{W}(\xi,0)&=V_{0}(\xi)\quad\mbox{for any}\quad\xi\in\mathbb{R}^{3}.
\end{align*}
The system in \eqref{eq:sysper} is of constant coefficients with a Friedrichs symmetrizer given by $S_{0}(V_{e})$. Therefore, its Cauchy problem is $L^{2}$-well posed and the unique solution satisfies that
\begin{align*}
	\widehat{W}(\xi,t)=e^{itA(\xi;V_{c})}V_{0}(\xi)=\left\lbrace\begin{array}{cc}
		e^{itA(\xi;V_{c})}\phi(\xi)Z(\xi)&\xi\in\Omega,\\
		0&\xi\in\mathbb{R}^{3}\setminus\Omega.
	\end{array}\right.
\end{align*}
The second statement in Theorem \ref{openkawa} assures that
\begin{align*}
	DQ(V_{c})\widehat{W}(\xi,t)&=\left\lbrace\begin{array}{cc}
		\phi(\xi)DQ(V_{c})e^{itA(\xi;V_{c})}Z(\xi)&\xi\in\Omega,\\
		0&\xi\in\mathbb{R}^{3}\setminus\Omega,
	\end{array}\right.\\
	&=\left\lbrace\begin{array}{cc}
		\phi(\xi)DQ(V_{c})e^{it\mu(\xi)}Z(\xi)&\xi\in\Omega,\\
		0&\xi\in\mathbb{R}^{3}\setminus\Omega,
	\end{array}\right.\\
	&=\left\lbrace\begin{array}{cc}
		\phi(\xi)e^{it\mu(\xi)}DQ(V_{c})Z(\xi)&\xi\in\Omega,\\
		0&\xi\in\mathbb{R}^{3}\setminus\Omega,
	\end{array}\right.
\end{align*}
where $\mu(\xi)=\xi\cdot v_{c}$.
By construction, $Z(\xi)\in\ker DQ(V_{c})$ for any $\xi\in\Omega$ and thus
\begin{align*}
	DQ(V_{c})\widehat{W}(\xi,t)=0\quad\mbox{for all}\quad\xi\in\mathbb{R}^{3},\quad t\geq 0.
\end{align*}
Therefore, by taking the inverse Fourier transform in the previous identity, we conclude that,
\begin{align}
	\label{eq:persistentwave}
	W(x,t)=\mathcal{F}^{-1}\left[e^{-i\xi\cdot(-tv_{c})}V_{0}(\xi)\right]=\mathcal{F}^{-1}(V_{0})(x+v_{c}t),
\end{align} 
complies with the properties of statement $(a)$. Moreover, by the Fourier-Plancherel formula, 
\begin{align*}
	\|W(\cdot,t)\|_{L^{2}(\mathbb{R}^{3})}^{2}&=(2\pi)^{-3}\|\widehat{W}(\cdot,t)\|_{L^{2}(\mathbb{R}^{3})}^{2}\\
	&=(2\pi)^{-3}\int_{\mathbb{R}^{3}}\left\lvert e^{it\xi\cdot v_{c}}V_{0}(\xi)\right\rvert^{2}d\xi\\
	&=(2\pi)^{-3}\|V_{0}\|_{L^{2}(\mathbb{R}^{3})}^{2}=\|\mathcal{F}^{-1}(V_{0})\|_{L^{2}(\mathbb{R}^{3})}^{2}\quad\mbox{for all}\quad t\geq 0.
\end{align*}
This concludes the proof.
\end{proof}
\section{Final comments and conclusions}
In this work we have shown that the only hyperbolic system of equations in compressible fluid dynamics involving an objective Cattaneo-type equation for the heat flux is not Friedrichs symmetrizable. We believe that this is an importat example for the literature in hyperbolic systems of equations in several dimensions. We have shown the existence of microlocal symmetrizers associated with the symbol $A(\xi;V(x,t))$ for smooth $V\in\mathcal{O}$ that comply with condition \eqref{eq:conseparation}. In this case, the $L^{2}$ energy estimates have to be obtained through the action of pseudodifferential operators. It remains to verify, if for less regular states $V\in\mathcal{O}$, the symbolic symmetrizer is Lipschitz continuous in $(x,t;\xi)\in\mathbb{R}^{3}\times\mathbb{R}_{+}\times\mathbb{S}^{2}$. Then, the energy estimates will be obtained through para-differential calculus (see, \cite{metivierl2} and \cite{metivierpara}). It is important to mention that, at this moment, the author of this work has not been capable of finding the explicit form of the symbolic symmetrizers.\\
Although, system \eqref{eq:quasi2} is not Friedrichs symmetrizable, its linearizations around states $V=(\rho,v,\theta,0,0,0)\in\mathcal{O}$ have Friedrichs symmetrizers. This led us to apply the Kawashima-Shizuta theory to the linearizations around constant equilibrium states in order to find out that the strict dissipativity of such systems is not satisfied. Now, for systems with this feature, it is always possible to find non-constant Maxwellians in the form travelling plane waves (see \cite[p. 735]{mascialini}). Indeed,  assume the existence of $\xi_{0}\in\mathbb{R}^{3}\setminus\{0\}$, $Z_{0}\in\mathbb{R}^{8}\setminus\{0\}$ and $\mu\in\mathbb{R}$ such that $DQ(V_{e})Z_{0}=0$ and $A(\xi_{0};V_{e})Z_{0}=\mu Z_{0}$. Then, for any smooth scalar function $H$, there is a non-constant Maxwellian in the form $W(x,t)=H(x\cdot\xi_{0}-\mu t)Z_{0}$. In contrast, we can construct non-constant Maxwellians in the form of waves by taking advantage of the violation of the Kawashima-Shizuta condition in open sets with respect to the Fourier frequency space.\\
It is well known that, the existence of a convex entropy plus the Kawashima-Shizuta condition guarantee stability of constant states for systems of hyperbolic-parabolic balance laws (see the introduction in \cite{mascialini}). There are many physical systems, still possessing dissipative entropies for which the Kawashima-Shizuta condition does not hold (see, \cite{zengas}, for example). This motivated Mascia and Natalini \cite{mascialini} in determining minimal conditions assuring stability of such equilibrium states. However, their results still require the conservative structure of the system and the existence of a dissipative entropy. These features doesn't seem to hold true for the quasilinear system \eqref{eq:quasi2}. This raises a new question: How far can we go (if possible) in the stability analysis without the Kawashima-Shizuta condition and the conservative structure?
\section*{Acknowledgements}
Thanks to professor Ram\'on Plaza, for his support in writing this work and to IIMAS UNAM, for their hospitality these past months. Thanks to Pedro Jordan for his trust in my work. This work was partially supported by CONAHCyT through a postdoctoral fellowship under grant CF-2023-G-122.

	\bibliographystyle{plain} 
	\bibliography{nonsymest}
	
\end{document}